\title{Injectivity radii of hyperbolic  \\ integer homology 3-spheres}
\author{Jeffrey F. Brock}
\address{ Dept.~of Math.\\ 
Brown University \\ 
Box 1917 \\ 
Providence, RI 02912, USA
}
\email{jeff\_brock@brown.edu}
\urladdr{http://www.math.brown.edu/~brock/}
\author{Nathan M. Dunfield}
\address{ Dept.~of Math., MC-382 \\
          University of Illinois \\
          1409 W. Green St. \\
          Urbana, IL 61801, USA
}
\email{nathan@dunfield.info}
\urladdr{http://dunfield.info}
\subjclass[2000]{} 
\keywords{}
\newcommand{\core}{\mathrm{core}}
\newcommand{\tors}{\mathrm{torsion}}
\newcommand{\reg}{\mathrm{reg}}
\newcommand{\thin}[2]{\mathrm{thin}_{#2} \, #1}
\newcommand{\Mfcover}{\widetilde{M}_f}
\newcommand{\half}{\mathbb{H}}
\newcommand{\cx}{\mathbb{C}}
\newcommand{\compos}{\circ}
\newcommand{\detprime}{\mathrm{det'}}
\newcommand{\Sengun}{{\c{S}}eng{\"u}n}
\DeclareMathOperator{\TR}{TorRat}
\newcommand{\p}{\mathfrak{p}}
\newcommand{\q}{\mathfrak{q}}
\newcommand{\qbar}{\overline{\q}}
\newcommand{\tilomega}{\widetilde{\omega}}
\newcommand{\HS}{\textit{HS}}
\begin{document}

\begin{abstract} 
  We construct hyperbolic integer homology 3-spheres where the
  injectivity radius is arbitrarily large for nearly all points of the
  manifold.  As a consequence, there exists a sequence of closed
  hyperbolic \3-manifolds which Benjamini-Schramm converge to $\H^3$
  whose normalized Ray-Singer analytic torsions do \emph{not} converge
  to the $L^2$-analytic torsion of $\H^3$.  This contrasts with the
  work of Abert et.~al.~who showed that Benjamini-Schramm convergence
  forces convergence of normalized betti numbers.  Our results
  shed light on a conjecture of Bergeron and Venkatesh on the growth of
  torsion in the homology of arithmetic hyperbolic \3-manifolds, and
  we give experimental results which support this and related conjectures.
\end{abstract}
\maketitle

\section{Introduction}

By Mostow rigidity, a hyperbolic structure on a closed \3-manifold $M$
is unique up to isometry.  While the geometry of $M$ is thus
completely determined by its underlying topology, it can be difficult
to understand the qualitative and quantitative connections between
these two facets of $M$.  Here, we show that a geometric property
involving injectivity radii can be varied independently of the
homology of the manifold.  To state our results, we first need some
notation. The injectivity radius $\inj_x(M)$ at $x \in M$ is the
largest radius for which the ball about $x$ is embedded, and the
(lower) injectivity radius of $M$ itself is $\inj(M) = \inf \setdef{
  \inj_x(M) }{x \in M}$.  On the topological side, an \emph{integer
  homology \3-sphere} is a closed \3-manifold $M$ where $H_*(M; \Z)
\cong H_*(S^3; \Z)$, and the term \emph{rational homology 3-sphere} is
similarly defined.  Our main result here is:
\begin{theorem}\label{thm:IHS}
  Given positive constants $R$ and $\epsilon$ there exists a
  hyperbolic integer
  homology \3-sphere $M$ where
  \[
  \frac{\vol \Big( \setdef{ x \in M }{ \inj_x(M) < R } \Big)}{\vol(M)}  < \epsilon.
  \]
\end{theorem}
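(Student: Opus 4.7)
The plan is to obtain $M$ by Dehn filling a deep congruence cover of an arithmetic cusped hyperbolic 3-manifold. Two ingredients combine. First, congruence covers of an arithmetic cusped $N_0$ (for instance a Bianchi orbifold $\mathrm{PSL}_2(\mathcal{O}_d) \backslash \H^3$) Benjamini--Schramm converge to $\H^3$, so for any $R$ and $\delta > 0$ one can pick a finite cover $N \to N_0$ whose $R$-thin part has volume fraction less than $\delta$. Second, Thurston's hyperbolic Dehn surgery theorem: if $N$ has cusps $T_1, \ldots, T_k$ and slopes $s_j \in H_1(T_j; \Z)$ all have geodesic length at least $L = L(R, \delta)$ on the maximal horocusp, then $N(s) := N(s_1, \ldots, s_k)$ is closed hyperbolic, its complement of the new short core geodesics is $(1{+}\delta)$-bi-Lipschitz to the complement of horoball cusp neighborhoods in $N$, and each new Margulis tube contributes only $O(1)$ volume.

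Once $N$ is deep enough in the tower that $\vol(N)$ exceeds $k$ by the required factor, any such long filling $M = N(s)$ has $R$-thin part of volume fraction less than $\epsilon$, independently of the exact choice of slopes within the length-$\geq L$ locus. So the problem reduces to choosing long slopes $s_1, \ldots, s_k$ with
\[
H_1(M; \Z) \;=\; H_1(N; \Z) \big/ \langle s_1, \ldots, s_k \rangle \;=\; 0.
\]

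The main obstacle is this last combinatorial-arithmetic step. By half-lives-half-dies, the peripheral inclusions span a $\Q$-subspace of $H_1(N; \Q)$ of dimension exactly $k$; arranging the arithmetic of $N_0$ and the congruence level so that $b_1(N) = k$, one can kill $H_1(N; \Q)$ using long slopes, since the length-$\geq L$ locus is Zariski-open in the product of peripheral lattices and so meets the rational-homology-killing locus densely. What remains is a finite torsion group in $H_1$, to be annihilated by integer adjustments $s_j \mapsto s_j + n_j \mu_j$, where $\mu_j$ is a second long primitive class on $T_j$. I would finish by using strong approximation for the underlying arithmetic group to show that the peripheral lattices surject onto $H_1(N; \Z/\ell\Z)$ for every prime $\ell$ dividing the torsion, giving enough room to reach determinant $\pm 1$ while staying inside the long-slope region. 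The delicate step that the body of the proof must handle is controlling this integral torsion simultaneously across all $k$ cusps as the congruence level, and hence the torsion itself, grows; one needs to ensure that the available long slopes remain plentiful enough to kill the new torsion that appears deeper in the tower.
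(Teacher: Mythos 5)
The geometric half of your plan (Benjamini--Schramm convergence of congruence covers plus Thurston's Dehn surgery theorem, with the filled tubes contributing bounded volume) is fine, and its last step parallels the paper's own passage from cusped to closed manifolds. The genuine gap is the homological half. To make $H_1(N;\Z)/\langle s_1,\dots,s_k\rangle=0$ with one slope per cusp you need the images of $k$ peripheral classes to generate all of $H_1(N;\Z)$. But ``half lives, half dies'' holds with coefficients in \emph{any} field: for every prime $\ell$ the image of $H_1(\partial N;\Z/\ell)$ in $H_1(N;\Z/\ell)$ has dimension exactly $k$. So your strong-approximation step, which asks the peripheral lattices to surject onto $H_1(N;\Z/\ell)$, would force $\dim_{\Z/\ell}H_1(N;\Z/\ell)=k$ for every $\ell$, i.e.\ $H_1(N;\Z)\cong\Z^k$ with no torsion and no cohomology beyond the Eisenstein part --- and this must hold at congruence levels deep enough in the tower to make the $R$-thin fraction small. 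Strong approximation says nothing about the size of $H_1$ of a congruence cover, and the required vanishing is expected to fail badly: congruence covers of Bianchi orbifolds acquire cuspidal classes (e.g.\ from base change), so $b_1(N)=k$ cannot simply be ``arranged,'' and Bergeron--Venkatesh/Le predict exponential growth of torsion in exactly these towers. Indeed the paper's introduction cites this expectation as the reason the congruence-tower strategy of Calegari--Dunfield and Boston--Ellenberg, which answers the \emph{rational} homology sphere version of Cooper's question, is unlikely to produce integer homology spheres. Your proposal therefore reduces the theorem to arithmetic statements that are open at best and conjecturally false. (The ``Zariski-open, hence meets the homology-killing locus'' step is also not an argument as written, but it is secondary.)

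For contrast, the paper avoids the arithmetic entirely by building the homology in by hand: it glues two handlebodies (or drills/refills curves on a Heegaard surface of a fixed $N_0$) by a map of the form $h\circ f^n\circ g\circ f^{-n}$ with $g$ acting trivially on $H_1(S;\Z)$, so $H_1$ of the result is exactly prescribed, and then uses Kleinian-group limiting arguments (strong convergence of maximal cusps $M(f^{-n}(P),f^n(P))$ to the infinite cyclic cover of a mapping torus $M_f$ with $\inj(M_f)>R+1$) to show that all but a bounded-volume piece of the resulting cusped manifold has injectivity radius close to $\inj(M_f)$, before Dehn filling. If you want a workable route, the homology control has to come from the topology of the construction, not from hoped-for vanishing of homology in congruence covers.
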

\noindent
In fact, we show that the homology of $M$ can be specified 
arbitrarily (Theorem~\ref{thm:full}).  The proof is based on the
modern theory of Kleinian groups; before sketching it, we motivate
our result in several ways.

\subsection{Cooper's question}

Starting with any closed hyperbolic \3-manifold, one can make the
injectivity radius arbitrarily large everywhere by taking a suitable
finite cover.  In the context of the Virtual Haken Conjecture, this
motivated Cooper to ask whether there are hyperbolic \emph{rational}
homology \3-spheres with arbitrarily large injectivity radius.  In
fact, such manifolds do exist by the work of Calegari-Dunfield and
Boston-Ellenberg \cite{CalegariDunfield2006, BostonEllenberg2006}.
However, if one instead considers \emph{integer} homology \3-spheres,
then the analogous question is open; our Theorem~\ref{thm:IHS} answers
affirmatively a weakened version of this question.   The manifolds of
\cite{CalegariDunfield2006, BostonEllenberg2006} came from a tower of
congruence covers of a fixed base manifold, and it seems unlikely this
method would work for integer homology \3-spheres as we now describe.

\subsection{Torsion growth}\label{sec:context}

Recently, number theorists have become interested in torsion in
the homology of arithmetic groups \cite{BergeronVenkatesh2010,
  CalegariVenkatesh2012}.  Specifically, Bergeron and Venkatesh
posited the following as part of an intriguing general conjecture for
arithmetic lattices in semisimple Lie groups; in the present context
of hyperbolic 3-manifolds, Le independently formulated a closely
related conjecture, see \cite{Le2009} for details.  

\begin{conjecture}[name=\cite{BergeronVenkatesh2010}]\label{conj:BV}
  Let $M$ be a closed congruence arithmetic hyperbolic \3-manifold,
  and $M \leftarrow M_1 \leftarrow M_2 \leftarrow M_3 \leftarrow
  \cdots$ a tower of congruence covers where $\inj(M_n) \to \infty$.
  Then the size of the torsion subgroup of $H_1(M_n; \Z)$ grows
  exponentially in $\vol(M_n)$ and moreover
  \begin{equation}\label{eqn:l2tor}
  \lim_{n \to \infty} \frac{\log\big| H_1(M_n ; \Z)_\tors \big|}{\vol(M_n)} =
    \frac{1}{6 \pi}
  \end{equation}
\end{conjecture}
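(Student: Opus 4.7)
The plan is to reduce to analytic torsion via the Cheeger-Müller theorem and then prove convergence of the normalized analytic torsion by a spectral comparison argument. For a closed oriented hyperbolic rational homology $3$-sphere $N$ (which one would need to verify for each $M_n$, or else replace trivial $\mathbb{Z}$ coefficients with a strongly acyclic $\rho$ as Bergeron-Venkatesh do in the general case), Poincar\'e duality and universal coefficients give $|H_1(N;\mathbb{Z})_\tors|$ as the Reidemeister torsion with trivial real coefficients, up to a bounded factor involving the regulators of $H_0$ and $H_3$. The Cheeger-Müller theorem equates this with the Ray-Singer analytic torsion
\[
\log T_{RS}(N) \;=\; \tfrac{1}{2} \sum_{p=0}^{3} (-1)^p \, p \, \log \detprime \Delta_p,
\]
where $\Delta_p$ is the Hodge Laplacian on $p$-forms. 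Thus Conjecture \ref{conj:BV} reduces to showing $\log T_{RS}(M_n)/\vol(M_n) \to 1/(6\pi)$, where the right-hand side is the $L^2$-analytic torsion density of $\mathbb{H}^3$, computed as a weighted Plancherel integral $-\tfrac{1}{2}\sum (-1)^p p \int_0^\infty \log \lambda \, d\mu_p^{\mathrm{Pl}}(\lambda)$. Exponential growth of the torsion then follows automatically from $1/(6\pi) > 0$.

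Next I would establish spectral convergence via Benjamini-Schramm. Because $\inj(M_n) \to \infty$, heat-kernel comparison on balls of radius $r \ll \inj(M_n)$ shows that the normalized spectral measure of $\Delta_p$ on $M_n$ converges weakly on compact subsets of $(0,\infty)$ to the Plancherel spectral measure of $\mathbb{H}^3$. Weyl's law and uniform off-diagonal heat-kernel bounds control the large-eigenvalue tail of $\log \detprime \Delta_p$ uniformly in $n$, reducing matters to an estimate on the near-zero contribution
\[
\sum_{\substack{\lambda \in \operatorname{spec}(\Delta_p|_{M_n}) \\ 0 < \lambda < \eta}} \log \lambda,
\]
which one wants to show is $o(\vol(M_n))$ for each fixed $\eta > 0$, after which $\eta$ is sent to $0$.

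The main obstacle is this small-eigenvalue control for $\Delta_1$. Small eigenvalues of $\Delta_1$ correspond exactly to nearly harmonic $1$-forms arising from torsion in $H_1$: a cyclic summand $\mathbb{Z}/N$ contributes an eigenvalue of size roughly $(\log N)^{-2}$, so the very torsion growth the conjecture predicts will itself produce a substantial cluster of tiny eigenvalues, and the task is to prove the cluster is not too dense. Concretely, one needs a density bound
\[
\# \bigl\{ \lambda \in \operatorname{spec}(\Delta_1|_{M_n}) \,:\, \lambda < \eta \bigr\} = o(\vol(M_n))
\]
together with matching $L^1$ control on $\log \lambda$ in this range. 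Here the congruence arithmetic hypothesis is essential: the plan would be to import spectral gap/density estimates from the Langlands program (for example, by Jacquet-Langlands transfer to a compact quaternionic form and effective multiplier bounds in the spirit of Sarnak-Xue, or via relating the spectrum of $\Delta_1$ on $M_n$ to automorphic representations of $\mathrm{GL}_2$ over the field of definition). Without such arithmetic input no control on the torsion spectrum is known, and, as the rest of this paper shows, the analogous statement genuinely fails outside the arithmetic congruence regime.
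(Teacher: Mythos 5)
This statement is not a theorem of the paper: it is Bergeron--Venkatesh's conjecture, quoted verbatim, and the paper neither proves it nor claims to. The paper's contributions are (a) experimental evidence for it and related statements (Section~\ref{sec:exp}) and (b) a cautionary result, Corollary~\ref{cor:tornonconverge}, showing that the purely geometric hypothesis of Benjamini--Schramm convergence does \emph{not} force $\tau(M_n)/\vol(M_n) \to 1/6\pi$. So there is no ``paper's own proof'' to compare against, and your proposal should be judged as an attack on an open problem.

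As such, it is a reasonable outline of the standard reduction, but it is not a proof, and the gaps are exactly the known open points. First, the Cheeger--M\"uller step as you state it requires either that each $M_n$ be a rational homology sphere or that one pass to strongly acyclic coefficients; congruence covers in such a tower typically have $b_1>0$, and then (\ref{eqn:CM}) carries the regulator term $2\log(\reg H^1(M_n))$, whose sublinearity in $\vol(M_n)$ is the \emph{second} half of the conjecture (as the paper explains in Section~\ref{sec:expintro}) and is not addressed by your argument. In the strongly acyclic setting the analogous statement is Bergeron--Venkatesh's theorem, but that is a different statement with a different constant and does not yield (\ref{eqn:l2tor}). Second, your spectral step reduces everything to showing the near-zero eigenvalues of $\Delta_1$ contribute $o(\vol(M_n))$ to $\log\detprime\Delta_1$, and you defer this to unspecified ``spectral gap/density estimates from the Langlands program''; no such estimates adequate for this purpose are known, and this is precisely where the problem is open. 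Indeed, Corollary~\ref{cor:tornonconverge} shows that the heat-kernel/Benjamini--Schramm comparison you invoke cannot by itself control this term: there are BS-convergent sequences for which the normalized analytic torsion tends to $0$, so the congruence structure must enter essentially, and your proposal does not say how. (A smaller inaccuracy: a $\Z/N$ summand in $H_1$ does not literally produce an eigenvalue of $\Delta_1$ of size $(\log N)^{-2}$; torsion is invisible to de Rham cohomology, and its link to the small spectrum is only through the regularized determinant, which is why the needed control is delicate.)
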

\noindent
In particular, if this conjecture holds then the approach of
\cite{CalegariDunfield2006, BostonEllenberg2006} which used exactly
such a tower to answer Cooper's question cannot be modified to prove
Theorem~\ref{thm:IHS}.

One of two key parts to Conjecture~\ref{conj:BV} is the expected
convergence of Ray-Singer analytic torsion in such a tower of covers.
More precisely, the logarithm of the analytic torsion of a Riemannian
manifold $M$ is
\[
\tau(M) = \frac{1}{2} \sum_{k=0}^{\dim M} (-1)^{k} \cdot k \cdot \log
\left( {\detprime}(\Delta_k) \right)
\]
where $\Delta_k$ is the Laplacian on smooth $k$-forms and $\detprime$
is the zeta-regularized product of nonzero eigenvalues (see
e.g.~\cite{Muller1978} for details).  Then for covers $M_n$ as in
Conjecture~\ref{conj:BV}, part of (\ref{eqn:l2tor}) is that one should
have
\begin{equation}\label{eqn:torconv}
\lim_{n \to \infty} \frac{ \tau(M_n) }{ \vol(M_n) } =  \tau^{(2)}( \H^3 ) 
\end{equation}
where $\tau^{(2)}(\H^3) = 1/6 \pi$ is the $L^2$-analytic torsion of
$\H^3$.  A corollary of Theorem~\ref{thm:IHS} is that one \emph{need
  not} have (\ref{eqn:torconv}) for a sequence $M_n$ of hyperbolic
\3-manifolds which Benjamini-Schramm converge to $\H^3$, which is a
natural geometric notion of convergence implied by the
hypotheses of Conjecture~\ref{conj:BV}.  As this
corollary was the primary motivation for this paper, we now discuss it
and its context in detail.

\subsection{Benjamini-Schramm convergence} \label{sec:BS}

For a manifold $M$, we define $\thin{M}{R} = \setdef{ x \in M }{
  \inj_x(M) < R }$.  Following \cite{ABBGNRS2012}, we say that a sequence
$\{M_n\}$ of closed hyperbolic \3-manifolds Benjamini-Schramm converge
to $\H^3$ if for all $R > 0$ one has $\vol\big( \thin{M_n}{R} \big)
\big/ \vol(M_n) \to 0$ as $n \to \infty$.  We emphasize here that the
$M_n$ may have no relationship with each other beyond being
hyperbolic; in particular, they need not be covers of a fixed manifold.
Despite this, Abert et.~al.~were able to show that this notion of
geometric convergence also implies convergence of part of the
topology:
\begin{theorem}[name=\cite{ABBGNRS2012}]\label{thm:localconv}
  Let $M_n$ be a sequence of closed hyperbolic \3-manifolds 
  which Benjamini-Schramm converge to $\H^3$.   Then
  \begin{equation}\label{eqn:l2betti}
  \lim_{n \to \infty} \frac{\dim H_1(M_n ; \Q)}{\vol(M_n)} = 0.
  \end{equation}
  
\end{theorem}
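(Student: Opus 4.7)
My plan is to bound $b_1(M_n) := \dim H_1(M_n;\Q)$ from above by a heat-kernel trace and then use the Benjamini-Schramm hypothesis to compare that trace with the heat kernel on $\H^3$, whose first $L^2$-Betti number vanishes.

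Let $\Delta_1$ be the Hodge Laplacian on 1-forms. By Hodge theory $b_1(M_n)=\dim\ker \Delta_1^{M_n}$, and since every eigenvalue of $e^{-t\Delta_1}$ lies in $(0,1]$ and equals $1$ on the kernel, for any $t>0$
\[
b_1(M_n)\;\leq\;\mathrm{tr}_{L^2}\bigl(e^{-t\Delta_1^{M_n}}\bigr)\;=\;\int_{M_n}\mathrm{tr}\,K^{M_n}_t(x,x)\,dx,
\]
where $K^{M_n}_t$ is the heat kernel on 1-forms. Write $a(t):=\mathrm{tr}\,K^{\H^3}_t(o,o)$, a constant on $\H^3$ by homogeneity.

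Fix $R>0$ and split $M_n$ into its $R$-thick part and $\thin{M_n}{R}$. At a thick point $x$ (so $\inj_x(M_n) \geq R$), I would lift to $\H^3$ and express $\mathrm{tr}\,K^{M_n}_t(x,x)$ as the sum of the $\H^3$-heat kernel $\tilde K_t$ over the $\pi_1(M_n)$-orbit of a lift $\tilde x$: the identity term contributes exactly $a(t)$, and every other term involves pairs at distance $\geq 2R$. Since $\tilde K_t$ decays essentially as $e^{-d^2/(5t)}$ while orbits in $\H^3$ have only exponential volume growth, the error is bounded by some $\varepsilon(R,t)$ with $\varepsilon(R,t)\to 0$ as $R\to\infty$ for each fixed $t$. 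On the thin part, constant sectional curvature $-1$ gives Cheeger--Gromov--Taylor style bounds $\mathrm{tr}\,K^{M_n}_t(x,x)\leq C(t)$ uniform in $n$ and $x$. Assembling,
\[
\frac{b_1(M_n)}{\vol(M_n)}\;\leq\; a(t)+\varepsilon(R,t)+C(t)\cdot\frac{\vol\bigl(\thin{M_n}{R}\bigr)}{\vol(M_n)}.
\]
Sending $n\to\infty$ kills the last term by Benjamini-Schramm convergence; then $R\to\infty$ kills $\varepsilon(R,t)$, leaving $\limsup_n b_1(M_n)/\vol(M_n)\leq a(t)$ for every $t>0$. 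Finally, as $t\to\infty$, the spectral theorem identifies $a(t)$ with the first $L^2$-Betti number $\beta_1^{(2)}(\H^3)$, which vanishes because $\H^3$ is an odd-dimensional symmetric space and admits no nonzero $L^2$-harmonic 1-forms.

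The delicate step is the heat-kernel comparison on the thick part: one must control the orbit sum by balancing Gaussian decay of $\tilde K_t$ against the exponential volume growth of $\H^3$, and for 1-forms (rather than just functions) keep track of the Weitzenböck curvature term so that a uniform bound survives in the thin part where the injectivity radius degenerates. Given $\sec=-1$ these estimates are classical, but executing them carefully in a form that is uniform in $n$ is the real work.
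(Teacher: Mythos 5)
You are comparing against a statement the paper does not actually prove: Theorem~\ref{thm:localconv} is quoted from \cite{ABBGNRS2012}, so the relevant benchmark is the heat-kernel argument of Abert et.~al., and your outline is essentially that argument. The thick-part comparison (identity term gives $a(t)$, non-identity orbit terms are $\varepsilon(R,t)$ by Gaussian decay versus exponential orbit growth) and the final step $a(t)\to\beta_1^{(2)}(\H^3)=0$ as $t\to\infty$ are fine. The genuine gap is the thin part: the claimed uniform bound $\mathrm{tr}\,K^{M_n}_t(x,x)\le C(t)$ for all $x$ and $n$ is false. If $x$ lies on (or near) the core geodesic of a Margulis tube whose core has length $\ell$, then the orbit of a lift $\tilde x$ contains roughly $1/\ell$ points within distance $1$ (the powers of the core loxodromic), so the on-diagonal trace is at least $c(t)/\ell\approx c(t)/\inj_x(M_n)$, which is unbounded as $\ell\to 0$; the holonomy twisting on $1$-forms gives no cancellation when the rotation angle is also small. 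Benjamini--Schramm convergence imposes no lower bound on the systole relative to the volume, so $\int_{\thin{M_n}{R}}\mathrm{tr}\,K_t$ need not be $o(\vol(M_n))$ (the trace-formula contribution of a single extremely short geodesic can dwarf $\vol(M_n)$), and your assembled inequality does not follow. This is not a technicality one fixes by being ``careful and uniform in $n$'': it is exactly the point at which the heat-kernel method needs an extra hypothesis or an extra idea.

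For comparison, \cite{ABBGNRS2012} run precisely your argument under a uniform discreteness assumption (a uniform lower bound on injectivity radius), and the statement as quoted requires handling the thin part by different means. The standard repair in the hyperbolic setting is topological rather than analytic: for $\epsilon$ below the Margulis constant, each component of $\thin{M_n}{\epsilon}$ is a tube (or cusp) of volume bounded below by a universal constant, so Benjamini--Schramm convergence forces the number of such components to be $o(\vol(M_n))$; each solid-torus component contributes at most a bounded amount to $b_1$ via Mayer--Vietoris, so it suffices to control the thick part, where your heat-kernel comparison applies but must now be carried out on a manifold with boundary (or after capping off), which your sketch does not address. With that additional step supplied, your strategy becomes the known proof; without it, the argument as written fails at the thin part.
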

Here, the $0$ in the right-hand side of (\ref{eqn:l2betti}) should be
interpreted as the first $L^2$--betti number of $\H^3$, and the moral
of Theorem~\ref{thm:localconv} is that suitable local convergence of
the geometry of the $M_n$ leads to convergence of their normalized
betti numbers to the corresponding $L^2$--betti number of their common
universal cover.  Theorem~\ref{thm:localconv} generalizes results of
L\"uck and Lott \cite{Luck1994, Lott1992} which apply only to $M_n$ coming from finite
covers of a fixed manifold (as in Conjecture~\ref{conj:BV}).

A key consequence of Theorem~\ref{thm:IHS} is that Theorem~\ref{thm:localconv} does
not have an analog for analytic torsion:
\begin{corollary}\label{cor:tornonconverge}
  There exist closed hyperbolic \3-manifolds $M_n$ which Benjamini-Schramm
  converge to $\H^3$ where $\tau(M_n) \big/ \vol(M_n) \to 0$ as
  $n \to \infty$.  In particular, the limit is not $\tau^{(2)}(\H^3) = 1/6\pi$.
\end{corollary}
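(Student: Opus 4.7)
The plan is to apply Theorem~\ref{thm:IHS} with the constants $R$ and $\epsilon$ sent to $\infty$ and $0$ respectively, producing a sequence of hyperbolic integer homology 3-spheres that Benjamini-Schramm converge to $\H^3$, and then to conclude via the Cheeger--M\"uller theorem that their analytic torsions grow sublinearly in volume.

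Concretely, I would set $R_n = n$ and $\epsilon_n = 1/n$ and let $M_n$ be the integer homology 3-sphere supplied by Theorem~\ref{thm:IHS}, so that $\vol(\thin{M_n}{n}) < \vol(M_n)/n$. For any fixed $R > 0$ and $n \geq R$, the inclusion $\thin{M_n}{R} \subseteq \thin{M_n}{n}$ yields $\vol(\thin{M_n}{R})/\vol(M_n) < 1/n$, which is Benjamini-Schramm convergence of $M_n$ to $\H^3$ in the sense of Section~\ref{sec:BS}. The thick part of $M_n$ is nonempty and contains an embedded hyperbolic ball of radius $n$, whose volume grows without bound, so $\vol(M_n) \to \infty$ as well.

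The key step is then to show that $\tau(M_n) = 0$. For this I would invoke the Cheeger--M\"uller theorem, which equates the Ray-Singer analytic torsion with the Reidemeister torsion. For a closed oriented rational homology 3-sphere, with the canonical volume-form normalization of $H_0(M;\R)$ and $H_3(M;\R)$, the Reidemeister torsion with trivial real coefficients equals $\log|H_1(M;\Z)_{\tors}|$. In our case $H_1(M_n;\Z) = 0$ by hypothesis, so $\tau(M_n) = 0$ identically, and hence $\tau(M_n)/\vol(M_n) = 0 \to 0 \neq 1/(6\pi) = \tau^{(2)}(\H^3)$.

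The main obstacle in the overall strategy is of course Theorem~\ref{thm:IHS}, which is the new content of the paper; the Cheeger--M\"uller identification is a classical result. The only mild delicacy is the precise normalization convention for the Reidemeister torsion of a non-acyclic complex, but in any reasonable convention $\tau(M_n)$ is at worst a sublinear function of $\vol(M_n)$ for $\Z$-homology spheres, so the normalized limit is $0$ regardless.
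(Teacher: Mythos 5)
Your strategy is the same as the paper's: feed $R_n = n$, $\epsilon_n = 1/n$ into Theorem~\ref{thm:IHS}, check Benjamini--Schramm convergence and $\vol(M_n)\to\infty$ (your embedded radius-$n$ ball argument is fine), and then use Cheeger--M\"uller to see that the analytic torsion of an integer homology sphere is negligible compared to the volume. One correction, however: your ``key step'' $\tau(M_n)=0$ is false under the paper's definition of $\tau$, which is the purely spectral quantity $\frac12\sum_k(-1)^k k\log\detprime(\Delta_k)$ and carries no normalization ambiguity. By the Cheeger--M\"uller formula as stated in (\ref{eqn:CM}), when $H_1(M_n;\Z)=0$ the torsion and regulator terms vanish but the volume term survives, so $\tau(M_n) = -\log\vol(M_n)$; the $-\log\vol$ comes from comparing the topological bases of $H^0$ and $H^3$ with the $L^2$-normalized harmonic representatives (the constant function has $L^2$-norm $\sqrt{\vol}$), which is precisely the non-acyclicity issue you wave at but misattribute to a mere convention choice for Reidemeister torsion. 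Your closing remark is what actually completes the proof: $\tau(M_n)/\vol(M_n) = -\log\vol(M_n)/\vol(M_n) \to 0$ because $\vol(M_n)\to\infty$, and this is exactly the paper's intended argument, so with that repair your proof is correct and coincides with the paper's route.
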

Thus, while the geometric notion of Benjamini-Schramm convergence is
enough to control the convergence of (normalized) betti-numbers to the
corresponding $L^2$ invariant of the limit, the same is not true for
torsion.

\subsection{Experimental results}\label{sec:expintro}

Corollary~\ref{cor:tornonconverge} limits how much one can
broaden Conjecture~\ref{conj:BV}, and in this narrow sense could be
taken as evidence against Conjecture~\ref{conj:BV}.  However, we
present here computational evidence which strongly supports
Conjecture~\ref{conj:BV} as well as certain generalizations to
nonarithmetic manifolds.  Our experiments complement prior work of
\Sengun\ \cite{Sengun2011, Sengun2012, SengunBanff} and Page
\cite{PageBanff}.  To frame our results, we need to expand on the
connection between Conjecture~\ref{conj:BV} and analytic torsion.
For a closed Riemannian \3-manifold, the Cheeger-M\"uller theorem
\cite{Cheeger1979, Muller1978} implies (see
e.g.~\cite[\S 5.1]{CalegariVenkatesh2012}) that 
\begin{equation}\label{eqn:CM}
\tau(M) = {\log \big| H_1(M;\Z)_{\mathrm{tor}}
  \big| }  - \log \big( \vol(M) \big) + 2 \log \big( \mbox{regulator
  of $H^1(N)$} \big)
\end{equation}
Here the regulator of $H^1(N)$ is the covolume of the lattice $H^1(N ;
\Z)$ in $H^1(N; \R)$, where the latter has the inner product coming
from its identification with the set of harmonic forms.  The first
part of Conjecture~\ref{conj:BV} is that $\tau(M_n)/\vol(M_n) \to
1/6\pi$ and the second is that $\log\big( \reg \ H^1(M_n) \big) \big/
\vol(M_n) \to 0$.  In Section~\ref{sec:exp}, we provide evidence in
favor of a broadening of the first part Conjecture~\ref{conj:BV} to
\emph{all} hyperbolic \3-manifolds:
\begin{conjecture}\label{conj:gentor}
  Let $M_n$ be covers of a fixed closed hyperbolic \3-manifold $M$
  which Benjamini-Schramm converge to $\H^3$.  Then $\tau(M_n)/\vol(M_n) \to
  1/6\pi$.
\end{conjecture}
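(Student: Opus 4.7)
The plan is to follow the heat-kernel strategy behind the Bergeron--Venkatesh argument. Using the standard identification $\log\detprime(\Delta_k) = -\zeta_k'(0)$ of the regularized determinant with the spectral zeta function
\[
\zeta_k(s) = \frac{1}{\Gamma(s)}\int_0^\infty t^{s-1}\bigl(\mathrm{Tr}\,e^{-t\Delta_k(M_n)} - \dim\ker\Delta_k(M_n)\bigr)\,dt,
\]
I would substitute into the alternating sum defining $\tau(M_n)$, normalize by $\vol(M_n)$, and invoke the Selberg trace formula to split each normalized heat trace into (a) an identity term proportional to $\vol(M_n)$ whose integrand is a universal function of $t$, and (b) a geometric term summing over conjugacy classes of closed geodesics in $M_n$.

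The identity contribution to $\tau(M_n)/\vol(M_n)$ reproduces $\tau^{(2)}(\H^3) = 1/(6\pi)$ essentially by definition of the $L^2$-analytic torsion of $\H^3$, so the conjecture reduces to showing that the geometric term together with the large-time residue coming from small eigenvalues is $o(\vol(M_n))$.  The geometric term is the easier half: any closed geodesic in $M_n$ of length at most $R$ is contained in $\thin{M_n}{R}$, whose volume is $o(\vol(M_n))$ by Benjamini--Schramm convergence, and standard orbital integral estimates render the contribution of geodesics of length $\geq R$ exponentially small in $R$.  Taking $n \to \infty$ followed by $R \to \infty$ disposes of the geometric piece.

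The main obstacle is the large-time contribution, which is dominated by small eigenvalues of $\Delta_1(M_n)$.  Concretely, one must prove
\[
\frac{1}{\vol(M_n)} \sum_{0 < \lambda_j(\Delta_1(M_n)) < \delta} \bigl|\log \lambda_j\bigr| \longrightarrow 0 \qquad \text{as } n \to \infty,
\]
for each fixed $\delta > 0$.  In the arithmetic setting of Conjecture~\ref{conj:BV} this follows from Sarnak--Xue-style congruence spectral gaps, but for a general cover tower no a priori spectral gap is available.  A natural route is to upgrade the convergence of spectral measures underlying Theorem~\ref{thm:localconv} (which only addresses the $\lambda = 0$ atom, and hence Betti numbers) to a quantitative upper bound on the density of states of $\Delta_1$ near zero that is valid uniformly over finite covers of $M$.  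Such a bound is in the spirit of the determinant conjecture in $L^2$-invariant theory, and supplying it appears to be the decisive new ingredient required.
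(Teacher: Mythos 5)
There is a genuine gap here, and in fact the statement you are trying to prove is labelled a \emph{conjecture} in the paper: the authors offer no proof of it, only the numerical evidence of Section~\ref{sec:exp} (convergence of $\TR(M)$ to $1$ for congruence-type covers of twist-knot orbifolds and related families). Your sketch correctly reproduces the standard Bergeron--Venkatesh strategy --- zeta-regularization, Selberg trace formula, identity term giving $\tau^{(2)}(\H^3)=1/6\pi$, geometric term controlled by $\thin{M_n}{R}$ --- but you yourself concede that the decisive step, namely
\[
\frac{1}{\vol(M_n)} \sum_{0 < \lambda_j(\Delta_1(M_n)) < \delta} \bigl|\log \lambda_j\bigr| \longrightarrow 0,
\]
is not supplied. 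That is not a technical loose end; it is precisely the open content of the conjecture (it is essentially the regulator/small-eigenvalue term in the Cheeger--M\"uller decomposition~(\ref{eqn:CM})), and no argument in the spirit of Theorem~\ref{thm:localconv} is known to yield it, since convergence of normalized Betti numbers only sees the atom of the spectral measure at $\lambda = 0$ and gives no rate of accumulation of eigenvalues near $0$.

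Moreover, the paper's own main result shows that the missing ingredient \emph{cannot} be extracted from Benjamini--Schramm convergence alone: Theorem~\ref{thm:IHS} and Corollary~\ref{cor:tornonconverge} produce hyperbolic \3-manifolds that Benjamini--Schramm converge to $\H^3$ with $\tau(M_n)/\vol(M_n) \to 0 \neq 1/6\pi$. Every step of your sketch except the unproven small-eigenvalue bound uses only Benjamini--Schramm convergence, so any completion of your argument must invoke the hypothesis that the $M_n$ are covers of a \emph{fixed} closed manifold $M$ in an essential way (e.g.\ through some uniform spectral or density-of-states control available for finite covers, as in the determinant-conjecture circle of ideas you allude to); otherwise it would prove a statement the paper explicitly refutes. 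As it stands, the proposal is a reasonable roadmap for why the conjecture is plausible, but it is not a proof, and the paper does not claim one either.
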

In contrast, it is not expected that $\log\big( \reg \ H^1(M_n) \big)
\big/ \vol(M_n) \to 0$ for nonarithmetic manifolds; we give data in
support of this, see especially
Figure~\ref{nonarithG}. For arithmetic
manifolds, experiments of \Sengun\ \cite{SengunBanff} identified the
case of congruence covers of prime-power level as a place where such
convergence appears to be slowest, to the point where one hits
computational limits before getting convincing evidence for or against
Conjecture~\ref{conj:BV}.  In Section~\ref{sec:exp}, we investigate
several families of examples of this type.  While some of these remain
ambiguous, overall they provide additional evidence that $\log\big( \reg \ H^1(M_n)
\big) \big/ \vol(M_n) \to 0$ even in this case.

\subsection{Proof sketch}

Given a homeomorphism $f$ of a surface $S$ there are two natural
3-manifolds we can build from it.  One is the mapping torus $M_f$,
which fibers over the circle.  Alternatively, we can identify $S$ with
the boundary of a handlebody $H$ and consider the associated Heegaard
splitting: $\HS_f = H \cup_f H$.  While the natural copies of $S$ in
$M_f$ and $\HS_f$ are radically different topologically (the first is
incompressible and the other maximally compressible), the philosophy
of Kleinian groups, specifically \cite{Namazi2005, NamaziSouto2009},
indicates that in favorable conditions on $f$, and for large powers
$n$, there are large chunks of the geometry of $M_{f^n}$ and
$\HS_{f^n}$ that are nearly isometric.

Here is the basic idea behind the manifolds in Theorem~\ref{thm:IHS}.
Fixing $R > 0$, it is easy to construct $(S, f)$ so that $M_f$ has
$\inj(M_f) > R + 1$.  Now for $M_f$ we have $b_1(M_f) > 0$, and in
particular $M_f$ is not a homology sphere.  However, we will
``photocopy'' its geometry onto a Heegaard splitting whose homology we
can independently control.  Specifically, choose homeomorphisms $h$
and $g$ of $S$ so that $\HS_h = S^3$ and $g$ acts trivially on $H_1(S;
\Z)$.  Then define $M_n$ to be the Heegaard splitting associated to $h \circ
f^n \circ g \circ f^{-n}$.  This $M_n$ is an integral homology sphere
since the gluing map acts on $H_1(S; \Z)$ precisely as $h$ does.  We
show that $f$ and $g$ can be chosen so that when $n$ is large, most of
the geometry of $M_n$ is locally nearly isometric to $M_f$ and hence
$\inj_x(M_n) > R$ on most of $M_n$.  Specifically, the volume of
$\thin{M_n}{R}$ is uniformly bounded whereas $\vol(M_n) \to \infty$;
hence we can make the ratio $\vol(\thin{M_n}{R})/\vol(M_n) <
\epsilon$, as required by Theorem~\ref{thm:IHS}.

In realizing this outline, there are several different routes one could take
through the machinery of Kleinian groups.  We choose one which only
uses results about manifolds with incompressible boundary and bounded
geometry.  Moreover, unlike the corresponding parts of
\cite{Namazi2005}, our argument does not rely on \cite{Tian1990}.  

\subsection{Open questions}
One very natural question is whether there are integral homology
3-spheres where the injectivity radius is large everywhere.  From the
point of view in the discussion in Sections~\ref{sec:context} and
\ref{sec:BS}, in fact it would be very interesting if one could add to
Theorem~\ref{thm:IHS} a \emph{uniform} lower bound on $\inj(M)$
independent of $R$ and $\epsilon$.  The current construction provides
no control on $\inj(M)$ as $R$ varies, basically because the genus of
$S$ has to change with $R$; see Remarks~\ref{rmk:genusinj} and
\ref{rmk:injepsilon}.

The weaker version of Theorem~\ref{thm:IHS} where one just requires
that $\inj_x M > R$ for \emph{some} $x$ follows from
\cite{PurcellSouto2010} by doing $1/n$ Dehn filling on the knot
complements constructed there which also have this property.  A
natural question is whether there are knots in $S^3$ where $\inj_x M$
is big most places. We give a possible construction of such knots in
Remark~\ref{rmk:knots}.

\subsection{Outline of the rest of the paper}

Section~\ref{sec:mainthm} gives the precise construction of the
manifolds in Theorem~\ref{thm:IHS} and proves that result modulo the
central Lemma~\ref{lem:geom-of-cusped}.  Section~\ref{sec:lemma}
reviews the needed background in Kleinian groups and uses it to prove
Lemma~\ref{lem:geom-of-cusped}.  Finally, Section~\ref{sec:exp}
contains the details of the experimental results.

\subsection{Acknowledgements}

The authors were partially supported by US NSF grants DMS-0906229,
DMS-0707136, and DMS-1105476.  We are very grateful to Nicolas
Bergeron for suggesting this question and explaining its relation to
\cite{ABBGNRS2012}, which happened at the conference ``Geometry,
analysis, and surfaces'' in Autrans, France, in March 2011.  The
computational part of this paper was motivated by a workshop on
torsion in the homology of arithmetic groups held in Banff in July
2012.  We thank the organizers of both of these excellent events.
Finally, we thank the referee for providing very helpful comments on
the initial version of this paper.

\section{Proof of the main theorem}\label{sec:mainthm}

The main result of this paper is:
\begin{theorem}\label{thm:full}
  Given positive constants $R$ and $\epsilon$ and a finitely-generated abelian
  group $A$, there exists a closed hyperbolic \3-manifold $M$ where
  \[
  H_1(M ; \Z) = A \mtext{and}  \frac{\vol(\thin{M}{R})}{\vol(M)}  < \epsilon.
  \]
\end{theorem}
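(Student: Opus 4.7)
My plan is to reuse the construction sketched for Theorem~\ref{thm:IHS} essentially unchanged, swapping the homeomorphism $h$ (chosen there so that $\HS_h \cong S^3$) for a homeomorphism $h_A$ whose Heegaard splitting has first homology $A$. The geometric half of the argument will be insensitive to this swap, while the algebraic effect on $H_1$ is controlled entirely by $h_A$.

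\textbf{Realizing $A$.} Every finitely-generated abelian group $A$ is the first homology of some closed orientable \3-manifold $N_A$: take a connected sum of copies of $S^1 \times S^2$ (for the free part) with lens spaces (for the cyclic torsion factors). Given $R$ and $\epsilon$, the geometric half of the proof of Theorem~\ref{thm:IHS} produces a closed surface $S$ of some genus $k = k(R)$, a pseudo-Anosov $f$ on $S$ with $\inj(M_f) > R + 1$, and a homeomorphism $g$ of $S$ with $g_* = \mathrm{id}$ on $H_1(S;\Z)$. By taking a sufficiently high-genus Heegaard splitting of $N_A$ (stabilizing if necessary, which alters neither $N_A$ nor $A$), we obtain a homeomorphism $h_A$ of $S$ itself with $\HS_{h_A} \cong N_A$.

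\textbf{Gluing and homology.} Define $M_n$ to be the Heegaard splitting whose gluing map is
\[
\phi_n \, = \, h_A \compos f^n \compos g \compos f^{-n}.
\]
A standard Mayer-Vietoris computation shows that, for any homeomorphism $\phi$ of $S$, the integer homology $H_1(\HS_\phi;\Z)$ depends only on the induced map $\phi_*$ on $H_1(S;\Z)$. Since $g_* = \mathrm{id}$, we have $(\phi_n)_* = (h_A)_*$, so
\[
H_1(M_n;\Z) \, \cong \, H_1(\HS_{h_A};\Z) \, \cong \, A
\]
for every $n$.

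\textbf{Geometric control and main obstacle.} The geometric argument of Theorem~\ref{thm:IHS}, via Lemma~\ref{lem:geom-of-cusped}, should show that for large $n$ the volume of $\thin{M_n}{R}$ remains uniformly bounded while $\vol(M_n) \to \infty$, whence $\vol(\thin{M_n}{R})/\vol(M_n) < \epsilon$ for $n$ large enough. The key step --- and the one where care is required --- is to verify that this geometric lemma applies in the present setting with $h_A$ in place of $h$. I expect this to be essentially automatic: the lemma should depend only on the long $f^n \compos g \compos f^{-n}$ portion of the gluing, which provides a central region nearly isometric to a long piece of the mapping torus $M_f$, while the outer caps enter only as fixed bounded-complexity data. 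If Lemma~\ref{lem:geom-of-cusped} is stated in a form allowing arbitrary handlebody caps, the extension is immediate; otherwise, one must revisit its proof and check that the identity of the fixed outer homeomorphism does not affect the geometric conclusion, which is the one point where I anticipate having to do genuine work.
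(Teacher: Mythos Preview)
Your proposal follows the heuristic sketch from the introduction rather than the paper's actual proof, and the gap you yourself flag is real. The paper's Lemma~\ref{lem:geom-of-cusped} is not stated for Heegaard splittings $\HS_{\phi_n}$ at all; it concerns a sequence of \emph{cusped} finite-volume manifolds $N_n$ built as link complements in a fixed closed manifold $N_0$ with $H_1(N_0;\Z)=A$. The closed examples are then obtained by $1/k$ Dehn filling on $N_n$, and the homology is preserved because the twists cancel in pairs on $H_1(S)$ (Lemma~\ref{lem:hom-ok}). The geometric control on the closed manifold comes from Thurston's hyperbolic Dehn surgery theorem applied after Lemma~\ref{lem:geom-of-cusped}, not from a direct model for the Heegaard splitting.

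The reason the paper takes this detour is precisely the obstacle you anticipate. Controlling the geometry of $\HS_{h_A \circ f^n \circ g \circ f^{-n}}$ directly requires a bounded-geometry model theorem for Heegaard splittings, i.e.\ for manifolds glued from \emph{handlebodies} with compressible boundary. That is the content of Namazi's unpublished thesis (see Remark~\ref{rmk:knots}), and the paper explicitly opts to avoid it, using only results about manifolds with incompressible boundary. In the link-complement construction, the pieces glued along thrice-punctured spheres are convex cores of maximal cusps $M(P,f^n(P))$ and of the fixed pared-acylindrical manifolds $(H^\pm,P)$ and $U$; all the hard geometric work (Theorem~\ref{thm:maxcusps}) happens in the incompressible $I$-bundle setting where Minsky's Bounded Geometry Theorem and the strong-convergence results of Proposition~\ref{prop:limits} are available. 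So your plan is not wrong in spirit, but to make it a proof you would either have to import the compressible-boundary model theorem or rework the construction along the paper's lines.
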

\noindent
We begin by constructing a certain \3-manifold which fibers over the
circle, the mapping torus of a homeomorphism of a surface,
which will be used as the geometric model for most of the manifold in
Theorem~\ref{thm:full}.

\begin{lemma}\label{lem:mp-inj}
  Given $R > 0$, there exists a closed hyperbolic \3-manifold $M$
  which is a mapping torus where $\inj(M) > R$.  
\end{lemma}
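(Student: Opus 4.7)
The plan is to begin with some hyperbolic mapping torus $M_0 = M_{f_0}$ and pass to a finite cover $M \to M_0$ in which every short closed geodesic has been killed, while the mapping-torus structure is preserved. Such an $M_0$ exists by Thurston's hyperbolization theorem: any pseudo-Anosov homeomorphism $f_0$ of a closed surface $S_0$ of genus at least two (for instance, one produced by Thurston's construction from Dehn twists along a pair of filling multicurves) yields a hyperbolic mapping torus $M_0 = M_{f_0}$.

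Since $M_0$ is closed, only finitely many conjugacy classes $[\gamma_1], \ldots, [\gamma_N]$ of nontrivial elements of $\pi_1(M_0)$ have translation length at most $2R$. Being a finitely generated subgroup of $\mathrm{PSL}(2,\mathbb{C})$, $\pi_1(M_0)$ is residually finite by Malcev's theorem, so I would choose a normal finite-index subgroup $H \leq \pi_1(M_0)$ containing none of the $\gamma_i$; by normality, no conjugate of any $\gamma_i$ would lie in $H$ either. In the associated finite cover $M \to M_0$, any closed geodesic of length at most $2R$ would correspond to an element of $H$ of translation length at most $2R$, hence conjugate in $\pi_1(M_0)$ to some $\gamma_i$ --- contradicting the choice of $H$. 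Thus $\inj(M) > R$.

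Finally, I would verify that $M$ itself is a mapping torus. The composition $M \to M_0 \to S^1$ is a smooth submersion of closed manifolds, hence a fiber bundle by Ehresmann's theorem, with fiber a finite (possibly disconnected) cover of $S_0$. If this fiber has $d$ components, they are cyclically permuted by the monodromy, so pulling back along the $d$-fold cover $S^1 \to S^1$ yields a fiber bundle $M \to S^1$ with connected fiber $\widetilde{S}$ that realizes $M$ as the mapping torus of a lift of $f_0^d$ to $\widetilde{S}$. The main obstacle I anticipate is this final connectedness bookkeeping; otherwise the argument is a routine combination of Thurston's hyperbolization theorem with residual finiteness of linear groups.
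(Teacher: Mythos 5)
Your proposal is correct and follows essentially the same route as the paper: fix a hyperbolic mapping torus, use residual finiteness of its fundamental group to pass to a finite cover avoiding the finitely many conjugacy classes of translation length at most $2R$, and observe that the fibration over $S^1$ pulls back. Your extra step re-fibering with connected fiber (first-return map / passing to a component, realizing $M$ as the mapping torus of a lift of a power of the monodromy) is a standard point the paper leaves implicit, and it is handled correctly.
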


\begin{proof}
  Fix some hyperbolic mapping torus $N$.  Then $N$ contains finitely many
  closed geodesics of length $\leq 2R$, corresponding to certain
  conjugacy classes $[\gamma_i]$ of elements of $\pi_1(N)$.  Since
  $\pi_1(N)$ is residually finite (see e.g.~\cite{LongReid1998}),
  there is a finite-index normal subgroup of $\pi_1(N)$
  which contains no $\gamma_i$.  If $M$ is the corresponding finite
  cover, then its shortest geodesic has length $> 2R$ and hence
  $\inj(M) > R$.  Since the fibration of $N$ over $S^1$ pulls back to
  one of $M$, we are done.  
\end{proof}

\begin{remark} \label{rmk:genusinj}
  A simple argument using minimal surfaces shows that any mapping
  torus of a surface of genus $g$ with $\inj(M) = R$ has $\log(g) \geq
  R - C$, where $C$ is independent of $R$; thus the genus of the fiber
  of $M$ in Lemma~\ref{lem:mp-inj} necessarily goes to infinity as $R$
  does.  While we have no need for this here, with a little more care
  the above construction can produce examples where $\log(g) \leq 3 R
  + C'$ as we now describe.  Specifically, take the base manifold $N$
  to be arithmetic of the simplest type, i.e.~defined by some
  quadratic form.  (There are many such fibered $N$ by Theorem 5.2 of
  \cite{Agol2008}.)  Now consider a tower $M_n$ of congruence covers
  of $N$.  If $d_n$ is the degree of $M_n \to N$, by Lemma 2.2.1 of
  \cite{Yeung1994} we know there is a constant $C''$ so that
  $\inj(M_n) \geq (1/3) \log d_n - C''$. On the other hand, the genus
  of the fiber grows at most linearly in $d_n$, and hence satisfies
  $\log(g) \leq 3 R + C'$ for some $C'$ independent of $R$.
\end{remark}

\subsection{Main Construction} \label{sec:construction}

We now detail the construction of the examples in
Theorem~\ref{thm:full}.  Throughout, fix $R > 0$ and a finitely
generated abelian group $A$.  Via Lemma~\ref{lem:mp-inj}, we choose a
pseudo-Anosov homeomorphism $f$ of a closed surface $S$ so that the
mapping torus $M_f$ has $\inj(M_f) > R + 1$.  Let $N_0$ be a connected
sum of lens spaces and copies of $S^2 \times S^1$ so that $H_1(N_0 ;
\Z) = A$.  Let $g$ be the genus of $S$, and let $H^+ \cup H^-$ be a
Heegaard splitting of $N_0$ of genus $g$; such a splitting exists
provided $g \geq \rank(A)$, and we can always make $g$ bigger if
necessary by replacing $M_f$ with a suitable finite cover.  Now
identify the Heegaard surface $\partial H^+ = \partial H^{-}$ with
$S$.  Choose a pants decomposition $P$ of $S$ so that the pared
manifolds $(H^\pm, P)$ are acylindrical; any $P$ at distance at least
$3$ from the disc sets of $H^+$ and $H^-$ will do.

Let $\gamma$ be a
separating essential simple closed curve on $S$ so that the pared manifold
\[
U = \big( (S \times [0, 2]) \setminus (\gamma \times \{1\}), \  P
\times \{0\} \cup \ P \times \{2\} \big)
\]
is acylindrical.  We now define a family of links in $N_0$ which
lie in a product neighborhood $S \times [0, 6]$ as follows
\[
L_n = P \times \{1\} \  \cup \ f^n(P) \times \{2\} \  \cup \ f^n(\gamma) \times
\{3\} \ \cup \ f^{n}(P) \times \{4\} \  \cup \ P \times \{5\}
\]
and consider their complements $N_n = N_0 \setminus L_n$.  We frame
$L_n$ by the blackboard framing with respect to the surfaces $S \times
\{s\}$ which contains it; that is, a longitude is a parallel copy of
the corresponding
component in $S \times \{s\}$. Define the closed manifold $N_{n,k}$ to be the following Dehn surgery on
$L_n$ in $N_0$: do $1/k$ Dehn surgery on each component which
is at heights $\{1, 2, 3\}$ and $-1/k$ Dehn surgery on each component at
heights $\{4, 5\}$.   For large $n$ and
$k$, these $N_{n,k}$ will be the examples used to prove Theorem~\ref{thm:full}.
To start, we show
\begin{lemma}\label{lem:hom-ok}
  The homology $H_1(N_{n,k} ; \Z) = A$ for all $n,k$. 
\end{lemma}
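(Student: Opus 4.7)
The plan is to reinterpret the Dehn surgeries as cut-and-paste modifications of the Heegaard splitting of $N_0$, and then to verify that the resulting modification acts trivially on $H_1(S;\Z)$. Specifically, I would use the standard correspondence between surgery and twisting: for an essential simple closed curve $c$ lying on a level surface $S \times \{s\} \subset N_0$ and framed by the blackboard framing, $\pm 1/k$ Dehn surgery on $c$ is equivalent to cutting $N_0$ along $S \times \{s\}$ and regluing by $T_c^{\pm k}$ (with the sign depending on conventions). Since the five layers of components of $L_n$ sit at disjoint heights $1,2,3,4,5$, these cut-and-paste effects compose in order. Absorbing the (twisted) thickened collar back into the handlebodies, I obtain a genus-$g$ Heegaard splitting $N_{n,k} = H^+ \cup_{\phi \circ \alpha} H^-$, where $\alpha$ is the gluing producing $N_0$ and $\phi$ is the product of Dehn twists
\[
\phi = T_P^{\epsilon k} \circ T_{f^n(P)}^{\epsilon k} \circ T_{f^n(\gamma)}^{\epsilon k} \circ T_{f^n(P)}^{-\epsilon k} \circ T_P^{-\epsilon k}
\]
for some sign $\epsilon \in \{\pm 1\}$.

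Next, I would verify that $\phi_* = \mathrm{id}$ on $H_1(S;\Z)$. The key point is that $\gamma$ was chosen to be \emph{separating}, so $[\gamma] = 0$ in $H_1(S;\Z)$, and hence $T_\gamma$ acts as the identity on $H_1(S;\Z)$; the same then holds for $T_{f^n(\gamma)}^{\epsilon k} = f^n T_\gamma^{\epsilon k} f^{-n}$. With this middle factor trivialized on $H_1$, the remaining Dehn twists pair up along identical curves with opposite exponents: the two $T_{f^n(P)}^{\pm \epsilon k}$ factors are inverses of each other and cancel, and then so do the two $T_P^{\pm \epsilon k}$ factors. Thus $\phi_* = \mathrm{id}$ on $H_1(S;\Z)$.

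Finally, a direct Mayer-Vietoris computation shows that $H_1$ of any Heegaard splitting $H^+ \cup_\psi H^-$ depends only on the induced action $\psi_*$ on $H_1(S;\Z)$, so changing the gluing from $\alpha$ to $\phi \circ \alpha$ with $\phi_* = \mathrm{id}$ leaves $H_1$ unchanged. Hence $H_1(N_{n,k};\Z) = H_1(N_0;\Z) = A$. The main obstacle is carefully justifying the surgery-to-twist dictionary---pinning down the order of composition and the signs arising from the blackboard framing---and checking that the resulting twisted thickened region really can be absorbed into the handlebodies. Once that dictionary is established, the design of the construction takes over: the specific arrangement of surgery coefficients $\pm 1/k$ together with the separating nature of $\gamma$ is precisely what forces all the Dehn twists to cancel on $H_1$.
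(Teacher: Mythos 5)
Your proposal is correct and follows essentially the same route as the paper: both translate the $\pm 1/k$ surgeries on blackboard-framed curves in the collar into composing the Heegaard gluing with powers of Dehn twists, use that the twist along the separating curve $f^n(\gamma)$ is trivial on $H_1(S;\Z)$, and then observe that the remaining twist powers at heights $\{1,2\}$ and $\{4,5\}$ cancel homologically, so the gluing acts on $H_1(S;\Z)$ exactly as for $N_0$. Your added details (the explicit palindromic composition and the Mayer--Vietoris justification that $H_1$ of a Heegaard splitting depends only on the gluing's action on $H_1(S;\Z)$) simply spell out steps the paper leaves implicit.
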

\begin{proof}
  Doing $1/k$ Dehn surgery along a single curve $\eta$ in $S$
  is equivalent to changing the gluing of the Heegaard splitting
  by the $k^{\mathit{th}}$ power of the Dehn twist on $\eta$.  Since
  $\gamma$ is separating, a Dehn twist on it acts trivially on the
  homology of $S$.  Thus, homologically, the Dehn twists along the
  components of $L_n$ at heights $\{1, 2\}$ precisely cancel out those
  at heights $\{4, 5\}$.  Hence $N_{n,k}$ has the same homology as $N_0$.  
\end{proof}

The key geometric claim is the following, whose proof we defer to Section~\ref{sec:lemma}.

\begin{lemma}\label{lem:geom-of-cusped}
  Let $\{N_n\}$ be the sequence of manifolds constructed above from
  the chosen $R > 0$.  For all large $n$, the manifold $N_n$ has a
  complete hyperbolic metric of finite volume, and moreover
  \[
  \lim_{n \to \infty} \frac{ \vol(\thin{N_n}{R})}{\vol(N_n)} = 0
  \]
\end{lemma}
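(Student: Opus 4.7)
The plan is to realize $N_n$ as a hyperbolic manifold assembled from a fixed family of acylindrical pared submanifolds glued along surfaces by high powers of the pseudo-Anosov $f$, then extract from this assembly a bilipschitz geometric model of $N_n$ containing long ``mapping-torus tubes'' modeled on $M_f$; the volume estimate is then a short consequence.

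First, I would decompose $N_n$ by cutting along level surfaces $S \times \{s\}$ in the product region $S \times [0,6]$, choosing $s$ between consecutive heights of $L_n$. The essential building blocks of the decomposition are the acylindrical pared manifolds of the construction: the handlebodies $(H^\pm, P)$ at top and bottom, and in the middle the piece $U$, which captures the drilled $\gamma \times \{3\}$ after reparametrizing the central region by $f^{-n}$ in the $S$-direction. The drilled curves $P \times \{1\}$, $P \times \{5\}$, $f^n(P) \times \{2\}$, $f^n(P) \times \{4\}$ supply the parabolic loci that mediate the gluings between adjacent pieces, and because $f$ is pseudo-Anosov, for large $n$ the twist $f^n$ moves $P$ arbitrarily far from itself in the curve complex of $S$. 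Combining this decomposition with the gluing and geometrization machinery for hyperbolic manifolds with incompressible boundary developed in Section~\ref{sec:lemma} then produces a complete finite-volume hyperbolic structure on $N_n$.

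Next, I would build a bilipschitz geometric model for $N_n$. Each acylindrical piece carries a canonical complete hyperbolic structure, and its image inside $N_n$ appears as a bounded ``core'' whose volume is independent of $n$. Between consecutive cores, the gluing by $f^n$ is realized geometrically by a long tube whose interior is bilipschitz, with constants independent of $n$, to a cyclic piece of the mapping torus $M_f$ of length growing linearly in $n$; this is the ``photocopy'' of $M_f$ from the introduction. Producing this model is the main technical step, and is where the Kleinian-group theory of Section~\ref{sec:lemma}, in the spirit of \cite{Namazi2005, NamaziSouto2009}, does the essential work.

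With the model in hand, the volume estimate is almost automatic. Because $\inj(M_f) > R + 1$, the bilipschitz constants on the tubes can be arranged so that $\inj_x(N_n) > R$ for every $x$ in a tube outside a collar of bounded length at each end; hence the tubes contribute only $O(1)$ to $\vol(\thin{N_n}{R})$. The cores contribute $O(1)$ because their volumes are bounded uniformly in $n$, and the deep-cusp portions of the (at most $4 \, |P| + 1$) rank-two cusps likewise contribute $O(1)$. Thus $\vol(\thin{N_n}{R})$ is bounded independently of $n$, while $\vol(N_n)$ grows at least linearly in $n$ from the two tubes, so the ratio tends to zero. The principal obstacle is the construction of the bilipschitz geometric model; once it is in place, the decomposition and the volume arithmetic are routine.
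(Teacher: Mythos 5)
Your overall strategy is the same as the paper's: cut $N_n$ into the fixed acylindrical pieces coming from $(H^\pm,P)$ and $U$ (remarked by $f^n$), together with two long middle regions whose geometry is modeled on $M_f$, bound $\vol(\thin{N_n}{R})$ by a constant independent of $n$, and let $\vol(N_n)$ grow. The paper realizes this concretely by writing $N_n$ as the union of $M^\pm$, $D_n$, and the convex cores of the maximal cusps $M\big(P,f^n(P)\big)$ and $M\big(f^n(P),P\big)$, glued along their totally geodesic thrice-punctured sphere boundaries; the rigidity of those surfaces is what makes the hyperbolic structure on $N_n$ immediate, with no appeal to general geometrization machinery, and pared acylindricity of the middle pieces (hence the ``for all large $n$'') is where the pseudo-Anosov hypothesis enters, much as you indicate.

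The genuine gap is in your treatment of the middle ``tubes.'' You first assert they are bilipschitz, with constants independent of $n$, to long cyclic pieces of $M_f$, and then that ``the bilipschitz constants can be arranged'' so that $\inj_x(N_n)>R$ outside collars of bounded length. The first assertion is Minsky's bounded geometry model, but its constant $L$ depends on the genus of $S$, and the genus necessarily grows with $R$ (Remark~\ref{rmk:genusinj}); an $L$-bilipschitz copy of a region of $M_f$ with $\inj(M_f)>R+1$ only yields injectivity radius on the order of $(R+1)/L$, which is useless here. The second assertion---that the model can be made nearly isometric on all but a uniformly bounded volume---is precisely the content that has to be proved, and your proposal offers no argument for it beyond deferring to ``the Kleinian-group theory of Section~\ref{sec:lemma},'' which is circular in context, or a gesture toward Namazi--Souto (the paper deliberately avoids relying on the unpublished nearly-isometric model theorem; see Remark~\ref{rmk:knots}). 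In the paper this step is Theorem~\ref{thm:maxcusps}: one uses strong convergence of $M\big(f^{-m}(P),f^n(P)\big)$ to $\Mfcover$ and of the one-sided iterations to the limits $M_A$, $M_C$ (Proposition~\ref{prop:limits}), and then a compactness argument---remarking by $f^{k_n}$ to move basepoints near the standard marking and splitting into the cases $n-|k_n|$ bounded or unbounded---to conclude $\inj_b(Y_n)>\inj(M_f)-\epsilon$ off $A_n\cup C_n$, with the $L$-bilipschitz model used only to locate basepoints near fibers and to see $\vol\big(\core(Y_n)\big)\to\infty$. Without an argument of this kind, your central claim that the tubes are $R$-thick away from bounded collars (and hence the $O(1)$ bound on $\vol(\thin{N_n}{R})$) is unsupported.
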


\begin{proof}[Proof of Theorem~\ref{thm:full}]
  Let $\epsilon > 0$ be given.  By Lemma~\ref{lem:geom-of-cusped},
  choose $n$ large enough so that $N_n$ is hyperbolic and
  $\vol(\thin{N_n}{R})\big/\vol(N_n) < \epsilon/2$.  We now view
  $N_{n,k}$ as a Dehn filling on the cusped manifold $N_n$.  By
  Thurston's Hyperbolic Dehn Surgery Theorem, for large $k$ the
  manifold $N_{n,k}$ is hyperbolic; moreover, the geometry of
  $N_{n,k}$ is arbitrarily close to that of $N_n$ outside a set of
  arbitrarily small volume, which is a neighborhood about the core
  geodesics of the added solid tori \cite{ThurstonLectureNotes,
    PetronioPorti2000}.  In particular, we can choose $k$ so that
  $\vol(\thin{N_{n,k}}{R})\big/\vol(N_{n,k}) < \epsilon$.  Since
  $H_1(N_{n,k}; \Z) = A$ by Lemma~\ref{lem:hom-ok} we have proved the
  theorem.
\end{proof}

\begin{remark} \label{rmk:injepsilon}
  For fixed $R$, the manifolds used to prove Theorem~\ref{thm:full}
  can be chosen with minimum injectivity radius bounded below
  independent of $\epsilon$ as we now explain.  As shown in
  Section~\ref{sec:lemma}, for large $n$ the manifolds $N_n$
  constructed have injectivity radius uniformly bounded below outside
  neighborhoods of the cusps.  Moreover, the geometry of said cusps are
  nearly isometric for large $n$.  The Drilling Theorem
  \cite{BrockBromberg2004} then shows that the choice of $k$ so that
  $N_{n,k}$ has geometry close to that of $N_n$ can be made
  independent of $n$, and the added core geodesics in $N_{n,k}$ have
  length uniformly bounded from below.
\end{remark}

\begin{remark} \label{rmk:knots} We chose the construction here to
  streamline the proof of Lemma~\ref{lem:geom-of-cusped} in
  Section~\ref{sec:lemma}.  Here is a combinatorially simpler
  construction satisfying Lemma~\ref{lem:geom-of-cusped} that relies
  on work of Namazi in his (as yet unpublished) thesis
  \cite{Namazi2005}, the relevant results of which will appear in
  \cite{BrockMinskyNamaziSoutoInPrep}; we hew to the published
  literature in 
  our present treatment.
  Let $f$ be as before, but if necessary change the identification of
  $S$ with the Heegaard surface of $N_0$ so that the invariant laminations
  of $f$ are disjoint from the closure in $\PML(S)$ of the disk sets
  of both $H^+$ and $H^-$ (which can be done by
  \cite{Kerckhoff1990,Gadre2011}).  Once again letting $\gamma$ be a
  separating curve on $S$, take $N'_n$ simply to be $N_0 \setminus
  f^n(\gamma)$.  By a bounded geometry model theorem for Heegaard
  splittings \cite{Namazi2005, BrockMinskyNamaziSoutoInPrep} (similar
  to Minsky's bounded geometry theorem \cite{Minsky2001} in the
  I-bundle case), given a sufficiently large $k$,
  chosen independent of $n$, the geometry of a $1/k$ Dehn-filling of
  $N'_n$ will be modeled up to bi-Lipschitz distortion by the
  geometry of that of $M_f$ for almost all of its volume.  An exactly
  analogous argument to the one given in the proof of
  Theorem~\ref{thm:maxcusps} allows us to make the bi-Lipschitz
  constant arbitrarily close to $1$ for almost all of the volume.  In
  our current treatment, the extra pairs of pants used to define $N_n$
  give us many canonical thrice-punctured spheres which, because of
  their rigidity, are natural places from which to understand the
  overall geometry of $N_n$ via geometric limits.

\end{remark}

\section{Proof of the main lemma}\label{sec:lemma}

The proof of Lemma~\ref{lem:geom-of-cusped} is our point of entry into
the modern theory of Kleinian groups. We first isolate the necessary
background before turning to the proof itself. 

\subsection{Kleinian background}

Throughout Section~\ref{sec:lemma}, we take $S$ to be a closed surface of
genus $g > 1$.  We denote by $AH(S)$ the set of all complete
hyperbolic \3-manifolds $M = \half^3/\Gamma$ equipped with
\emph{markings}, or homotopy equivalences $h \colon S \to M$, up to
marking preserving isometry; precisely,
$$(h \colon S \to M) \sim (g \colon S \to N)$$
if there is an
isometry $\phi \colon M \to N$ where $\phi \circ h \simeq g$.
The {\em mapping class group} $\MCG(S)$ of orientation preserving
self-homeomorphisms of $S$ up to isotopy acts on $AH(S)$ by
precomposition: given $f \in \MCG(S)$ we let 
$$f \cdot (h \colon S \to M) = \big(h \circ f^{-1} \colon S \to
M\big).$$
We refer to this action as {\em remarking} the element $(h \colon S
\to M)$ by $f$. 

A hyperbolic 3-manifold $M$ determines a conjugacy class of
\emph{Kleinian groups}, that is, of discrete subgroups of
$\Isom^+(\H^3) = \PSL{2}{\C}$.  A specific group is identified by a
choosing once and for all a fixed {\em baseframe} $\tilomega$, that
is, an orthonormal frame in the tangent space at a point in $\H^3$,
and a baseframe $\omega$ in the tangent space at a point in $M$; the
group $\Gamma$ is then taken so that the derivative of the
covering projection $$\H^3 \to M = \H^3/\Gamma$$ sends $\tilomega$ to $\omega$.
In practice, we will refer to a base-frame $\omega$ as being {\em in}
$M$ in reference to the underlying basepoint.

The space $AH(S)$ is readily seen to be the set of conjugacy classes
of discrete faithful representations $\rho \colon \pi_1(S) \to
\PSL{2}{\mathbb{C}}$, via the association $[\rho] = h_*$; $AH(S)$ is
topologized by convergence of representatives on generators.

On the level of manifolds, we can reformulate algebraic
convergence: 
a sequence
$(h_n, M_n)$ of elements of $AH(S)$ \emph{converges algebraically} to
$(h,M)$ if for each compact subset $K \subset M$ there are smooth homotopy equivalences $\varphi_n \colon M
\to M_n$ with $\varphi_n \compos h \simeq h_n$ so that for each
compact subset $K \subset M$ the derivatives $D\varphi_n$ 
converge to an isometry at each point of $K$.
If a baseframe $\omega$ in $M$ is chosen so that $(M,\omega)$ has
corresponding Kleinian group $\Gamma$, then taking $K$ containing
$\omega$, the baseframes $\omega_n = D\varphi_n(\omega)$ in $M_n$
determine Kleinian groups $\Gamma_n$ admitting isomorphisms $\rho_n
\colon \pi_1(S) \to \Gamma_n$ that converge to a limit $\rho \colon
\pi_1(S) \to \PSL{2}{\C}$ in the sense that $\rho_n(\gamma) \to
\rho(\gamma)$ for all $\gamma \in \pi_1(S)$; here $\rho_n = (h_n)_*$
and $\rho = h_*$.

 Based manifolds $(M_n,
\omega_n)$ \emph{converge geometrically} to a {\em geometric limit} $(M_G,\omega_G)$ if their associated Kleinian
groups $\Gamma_n$ converge to the associated Kleinian group $\Gamma$
for $(M_G,\omega_G)$ in the {\em Hausdorff topology}:
\begin{enumerate}
\item for each $\gamma \in \Gamma$ there are $\gamma_n \in \Gamma_n$
  so that $\{\gamma_n\}_n \to \gamma$, and 
\item if $\gamma$ is a limit point in $\PSL{2}{\C}$ of a set
  $\{\gamma_n'\}_n$ with $\gamma_n' \in \Gamma_n$, then $\gamma$ lies
  in $\Gamma$.
\end{enumerate}
By elementary compactness results (see
\cite[Prop.~2.1]{McMullen1996}), any algebraically convergent sequence
$(h_n,M_n) \to (h, M)$ has a subsequence with an
associated geometric limit $M_G$; this geometric limit is
obtained by choosing baseframes $\omega_n$ to obtain convergent
representations $\rho_n \to 
\rho$ and then passing to a convergent subsequence of the corresponding
sequence of based manifolds $(M_n,\omega_n)$.

Note that we have a locally isometric covering map $(M,
\omega) \to (M_G, \omega_G)$.  The sequence $(h_n, M_n)$
\emph{converges strongly} if it converges both algebraically and
geometrically and moreover the locally isometric cover $M \to
M_G$ is an isometry (in particular, a {\em homeomorphism}).

Geometric convergence also has this intrinsic formulation:
$(M_n,\omega_n) \to (M_G, \omega_G)$ if for each compact subset $K
\subset M_G$ with $\omega_G \in K$, there are smooth bi-Lipschitz
embeddings 
$$\psi_n \colon (K,\omega_G) \to (M_n,\omega_n)$$ for $n$ sufficiently
large so that the derivatives of $\psi_n$ converge to isometries at
each point of $K$.
 While the limit $(M_G,\omega_G)$
depends on the choice of baseframes $\omega_n$, if $\omega_n'$ lie at
a uniformly bounded distance from $\omega_n$ then any limit of the
sequence $(M_n, \omega_n')$ is isometric to $M_G$.

We adopt the convention that given an algebraically convergent sequence
$$(h_n,M_n) \to (h,M)$$ and a choice of $\omega$ in $M$, that baseframes
$\omega_n$ are determined by the associated smooth homotopy
equivalences $\varphi_n \colon M \to M_n$ with via $D
\varphi_n(\omega) = \omega_n$.  With this convention, images
$\varphi_n\circ h(S)$ sit at uniformly bounded distance from the
baseframes $\omega_n$.

\subsection{Maximal Cusps} If $P$ and $Q$ are sets of simple closed
curves giving a pants decomposition of $S$, denote by $M(P, Q)$ the
corresponding pared manifold
\[
\big(S \times I, \ P \times \{0\} \cup Q \times \{1\}\big).
\]
We say $M(P, Q)$ is {\em pared acylindrical} if no simple closed curve
isotopic into $P$ is also isotopic into $Q$.  For pared acylindrical
$M(P,Q)$ there is a finite-volume hyperbolic structure on $S \times
\R$ so that each free homotopy class represented by the pared locus
corresponds to a rank-1 cusp.  The hyperbolic structure is unique, and
letting $S$ mark $M(P,Q)$ by its inclusion as $S \times \{1/2 \}$, we
obtain a boundary point in the deformation space $AH(S)$ known as a
\emph{maximal cusp}.

The convex core of $M = \half^3 /\Gamma$, denoted $\core(M)$, is the
quotient by $\Gamma$ of the smallest convex subset of $\half^3$ whose
closure contains the limit set of $\Gamma$, which is the intersection
of the closure of an orbit of $\Gamma$ with $\hat \cx = S^2_\infty$.
The pared convex core, written $\core^0(M)$, is the complement in
$\core(M)$ of its intersection with the Margulis thin parts of $M$
corresponding to cusps.  While $\core\big(M(P,Q)\big)$ has frontier
consisting of totally geodesic triply-punctured spheres, the boundary
of $\core^0\big(M(P,Q)\big)$ consists of a pair of compact surfaces
each containing a collection of distinguished annuli representing its
intersection with cusps corresponding to $P$ and $Q$ respectively.

Much of the theory of algebraic and geometric limits of quasi-Fuchsian
manifolds $Q(X,Y)$ in $AH(S)$ can be carried out for maximal cusps $M(P,Q)$ by
viewing the pair $(P,Q)$ as a combinatorial version of the pair
$(X,Y) \in \Teich(S) \times \Teich(S)$ of marked conformal 
structures determining $Q(X,Y)$.
Indeed, as each $M(P,Q)$ is uniquely determined by the
choice of $P$ and $Q$, much of the theory becomes more concrete in
this setting.

\subsection{Pseudo-Anosov double limits}  
For a pseudo-Anosov element $f\in \MCG(S)$, we fix a fiber $F$ in
the associated mapping torus $M_f$, the corresponding fibration over $S^1$ with
monodromy $f$.  We define the
\emph{block} $B_f$ of $f$ to be $M_f$ split open along $F$, that is,
the closure of $M_f \setminus F$ in the path metric.  We define
$\Mfcover$ to be the infinite-cyclic cover of $M_f$ corresponding to
$\pi_1(F)$.

Thurston and McMullen showed that the double iteration
$Q\left(f^{-n}(X),f^n(X)\right)$ of $f$ on quasi-Fuchsian
manifolds converges strongly to $\Mfcover$.
Likewise, McMullen established that the one-sided iteration
$Q\left(X,f^n(X)\right)$ converges strongly to a limit $Q_f$ with one end
asymptotically isometric to $\Mfcover$: there is a bi-Lipschitz
diffeomorphism between neighborhoods of the infinite-volume end of
$\core( Q_f )$ and an end of $\Mfcover$ so that the norm of the
derivative converges to $1$.   Each of these discussions can be
carried out in the setting of maximal cusps:
\begin{proposition}\label{prop:limits}
  The maximal cusps $M\left(f^{-m}(P),f^n(P)\right)$ for $m, n >0$
  converge strongly to $\Mfcover$ as $m, n \to \infty$.  The one-sided iteration $M\left(P,
    f^n(P)\right)$ converges strongly to a manifold $M_A$ 
  whose pared convex core contains one compact boundary surface $S$
  with parabolic locus $P$ and a degenerate end asymptotically
  isometric to the positive end of $\Mfcover$.  The analogous
  statement holds for 
  $M\left(f^{-n}(P),P \right)$, whose limit is denoted $M_C$. 
\end{proposition}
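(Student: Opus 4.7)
The plan is to mirror the argument that Thurston and McMullen used for quasi-Fuchsian double limits, replacing the marked conformal structures $(X, f^n(X)) \in \Teich(S) \times \Teich(S)$ with the combinatorial data $(P, f^n(P))$ specifying maximal cusps. The central dynamical input is that since $f$ is pseudo-Anosov with stable and unstable projective laminations $\lambda^\pm \in \PML(S)$, we have $f^n(P) \to \lambda^+$ and $f^{-m}(P) \to \lambda^-$ as $n, m \to \infty$, and $\lambda^\pm$ fill $S$. Each $M(f^{-m}(P), f^n(P))$ is pared acylindrical for large $m,n$ (a separating curve simultaneously isotopic into $f^{-m}(P)$ and $f^n(P)$ would contradict that $\lambda^\pm$ fill), so the maximal cusps are well-defined points of $\partial AH(S)$.

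First I would establish subsequential algebraic convergence. One route is to approximate each maximal cusp $M(P,Q)$ by a quasi-Fuchsian sequence $Q(X_i, Y_i)$ with $X_i$ pinching $P$ and $Y_i$ pinching $Q$, and then apply Thurston's Double Limit Theorem together with a diagonal argument; the filling property of $\lambda^\pm$ provides the required ``binding'' hypothesis. Any subsequential algebraic limit $(h_\infty, M_\infty)$ is then a hyperbolic structure on $S \times \R$ in which the classes of $f^{\pm n}(P)$ have ceased to be parabolic, because their projective limits are filling laminations. Next I would identify $M_\infty$: applying the Ending Lamination Theorem in the incompressible case (Minsky, Brock--Canary--Minsky), the two ends of $M_\infty$ are both degenerate with ending laminations $\lambda^-$ and $\lambda^+$ respectively, so $M_\infty$ is isometric to the doubly degenerate manifold $\Mfcover$.

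To upgrade to strong convergence, I would show that the geometric limit agrees with the algebraic limit. Since $\Mfcover$ has two filling ending laminations, Thurston's covering theorem (in the form established by Canary) forbids a nontrivial locally isometric covering $\Mfcover \to M_G$; hence $M \to M_G$ is an isometry, which is the definition of strong convergence. The one-sided statements are handled identically, except that the ``$P$'' end never degenerates: throughout the sequence $M(P, f^n(P))$, the parabolic locus coming from $P$ persists and limits to a geometrically finite end with parabolic locus $P$ and a single compact boundary component $S$ cut along $P$. Only the opposite end degenerates, with ending lamination $\lambda^+$. The Ending Lamination Theorem identifies the limit uniquely as $M_A$, and the asymptotic isometry of its positive end with the positive end of $\Mfcover$ follows from the fact that the Minsky model for a degenerate end depends only on its ending lamination (together with matching the thick--thin decomposition via the model manifold). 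The case of $M(f^{-n}(P), P)$ is symmetric and produces $M_C$.

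The main obstacle is the upgrade from algebraic to strong convergence in the maximal cusp setting, since unlike the quasi-Fuchsian case one must simultaneously control sequences of rank-one cusps whose number and location vary with $n$. The potential pitfall is that the geometric limit could a priori be a proper quotient of $\Mfcover$ with additional accidental parabolics arising from curves in $f^n(P)$ whose lengths race to zero at different rates. Ruling this out requires both the filling property of $\lambda^\pm$ (to ensure no curve class remains short in the limit) and Canary's covering theorem (to rule out nontrivial covers of a tame limit); handling the bookkeeping of parabolic classes along a double sequence, as opposed to a one-parameter family, is the most delicate technical point.
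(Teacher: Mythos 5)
Your route (Double Limit Theorem for compactness, Ending Lamination Theorem to identify the limit, covering theorem for strong convergence, Minsky models for the ends) is genuinely different from the paper's, which takes the Thurston--McMullen strong convergence of $Q\left(f^{-m}(X),f^n(X)\right)$ and $Q\left(X,f^n(X)\right)$ as known and transfers it to the maximal cusps by applying the Brock--Bromberg Drilling Theorem to the short geodesic representatives of $f^{-m}(P)$ and $f^n(P)$ in $Q\left(f^{-m}(X),f^n(X)\right)$, where $X$ is chosen with $P$ very short so the drilling map is nearly isometric, followed by a diagonal argument. But two of your steps have genuine gaps. First, the covering-theorem step is false as stated: Canary's covering theorem does \emph{not} forbid a nontrivial locally isometric covering $\Mfcover \to M_G$ --- the infinite cyclic covering $\Mfcover \to M_f$ is exactly such a covering, and it realizes the exceptional case the theorem allows. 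What the theorem gives is a dichotomy: either the covering of the geometric limit by the algebraic limit is trivial, or $M_G$ is a finite-volume manifold virtually fibering with $\Mfcover$ as a fiber cover. You must rule out the second alternative separately (here it can be done: the fiber is the closed surface $S$, so $M_G$ would be closed, and geometric convergence would then produce a bi-Lipschitz embedding of a closed manifold into the connected noncompact $Y_n$, which is impossible); and in the one-sided case, where the limit retains rank-one cusps along $P$, one must also exclude new or upgraded parabolics in the geometric limit, e.g.\ via the strong-convergence criteria of Anderson--Canary or Evans.

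Second, and more seriously for how the proposition is used, the claim that the degenerate end of $M_A$ is \emph{asymptotically isometric} to the positive end of $\Mfcover$ does not follow from the fact that Minsky's model depends only on the ending lamination: the Bounded Geometry Theorem and the model give a bi-Lipschitz constant depending on the genus of $S$, uniformly bounded but not tending to $1$. The proposition requires constants approaching $1$ --- this is exactly what is needed in the proof of Theorem~\ref{thm:maxcusps}, where $E_A$ must be $1+\epsilon/(2\inj(M_f))$ bi-Lipschitz to an end of $\Mfcover$ so that injectivity radii drop by at most $\epsilon/2$ --- and the remark following Theorem~\ref{thm:maxcusps} flags precisely this inadequacy of genus-dependent bi-Lipschitz control. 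In the paper the asymptotic isometry is imported from McMullen's theorem on the one-sided iteration $Q\left(X,f^n(X)\right)$ (proved with his inflexibility machinery) and carried over to the maximal cusp by the Drilling Theorem; your argument needs some comparable input. Your remaining assertions (no accidental parabolics in the algebraic limit, computation of its ending laminations) are stated without proof, but they are standard and roughly at the level of detail of the paper's own sketch.
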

\noindent 
See Figure~\ref{fig:maxcusps} for schematic pictures of $M_A$ and $M_C$.

\begin{proof}[Proof sketch]
  There are various ways to deduce these results, which follow easily
  from variations of the original arguments in \cite{Thurston1998,
    McMullen1996}.  Perhaps the simplest is the following, where for
  concreteness we focus on the first claim.  Consider a surface $X \in
  \Teich(S)$ where $P$ has very short total length and apply the Drilling
  Theorem of \cite{BrockBromberg2004} to the short geodesic
  representatives of $f^{-m}(P)$ and $f^n(P)$ in the quasi-Fuchsian hyperbolic
  3-manifold $Q_{m,n} = Q\big(f^{-m}(X),f^n(X)\big)$. The drilled manifold
  $D_{m,n}$ has a bi-Lipschitz diffeomorphism between $\core^0(D_{m,n})$ and a
  subset of $Q_{m,n}$; this diffeomorphism can be made arbitrarily close
  to isometric by making the length of $P$ on $X$ small enough.
  Now since $D_{m,n}$ has a cover isometric to
  $M\left(f^{-m}(P),f^n(P)\right)$, a diagonal argument yields the
  proposition.
\end{proof}

Our main result of this section is:
\begin{theorem}\label{thm:maxcusps}
  Given a pseudo-Anosov $f \in \MCG(S)$ and a pants decomposition $P$
  of $S$, let $Y_n = M\left(f^{-n}(P), f^n(P)\right)$.  For each
  $\epsilon >0$ there are finite-volume hyperbolic \3-manifolds $A$
  and $C$ so that for all $n$ sufficiently large, $\core(Y_n)$
 has a decomposition
  $$ \core(Y_n)  = A_n \cup B_n \cup C_n$$
  where $A_n$ and $C_n$ are $1+\epsilon$ bi-Lipschitz to $A$ and $C$
  and $\inj_b(Y_n) > \inj(M_f) - \epsilon$ for every $b \in B_n$.
  Moreover $\vol(B_n) \to \infty$ as $n \to \infty$.
\end{theorem}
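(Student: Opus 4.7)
The plan is to apply Proposition~\ref{prop:limits} three times, using $\MCG(S)$-equivariance to re-interpret the same manifold $Y_n$ from three distinct basepoints yielding three distinct geometric limits.  First, fix an integer $k_0$ depending on $\epsilon$ and define the target finite-volume hyperbolic \3-manifolds $A := M\left(P, f^{k_0}(P)\right)$ and $C := M\left(f^{-k_0}(P), P\right)$.  By Proposition~\ref{prop:limits}, $M\left(P, f^k(P)\right) \to M_A$ strongly as $k \to \infty$, so for $k_0$ large enough there is a $(1+\epsilon/2)$-bi-Lipschitz embedding of $A$ onto a subregion of $M_A$, and analogously for $C$ into $M_C$.

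The $\MCG(S)$-action on $AH(S)$ produces three simultaneous strong convergences.  Remarking $Y_n$ by $f^n$ identifies it with $M\left(P, f^{2n}(P)\right)$, which converges strongly to $M_A$; remarking by $f^{-n}$ yields $M\left(f^{-2n}(P), P\right) \to M_C$; and with a baseframe in the central fiber, $Y_n \to \Mfcover$ directly.  For $n$ large, each strong convergence produces smooth $(1+\delta_n)$-bi-Lipschitz embeddings of arbitrarily large compact subregions of the respective limit into $Y_n$, with $\delta_n \to 0$.  Composing the $A$- and $C$-type embeddings with the fixed embeddings $A \includesin M_A$ and $C \includesin M_C$ gives the required subregions $A_n, C_n \subset \core(Y_n)$, each $(1+\epsilon)$-bi-Lipschitz to $A$ and $C$ and sitting near the pants-decomposition frontiers at $f^{-n}(P)$ and $f^n(P)$.

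Set $B_n := \core(Y_n) \setminus (A_n \cup C_n)$.  The injectivity bound follows from the third convergence: since $\Mfcover \to M_f$ is a local isometry, $\inj_x(\Mfcover) \geq \inj(M_f)$ at every $x$, so any point of $Y_n$ in a bi-Lipschitz approximation of a compact set of $\Mfcover$ has $\inj > \inj(M_f) - \epsilon$ once $\delta_n$ is small enough.  For the volume claim, as $n \to \infty$ the approximation region inside $Y_n$ grows to contain arbitrarily many fundamental domains for the $\Z$-action on $\Mfcover$, each of volume $\vol(M_f) > 0$; hence $\vol(B_n) \to \infty$ at a rate linear in $n$.

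The principal obstacle is the covering step: showing that every point of $B_n$ actually lies in the bi-Lipschitz image of a compact region of $\Mfcover$, so that the three convergence neighborhoods collectively exhaust $\core(Y_n)$.  I would argue by compactness and contradiction: if a subsequence $b_n \in B_n$ escaped every such approximation region while avoiding $A_n$ and $C_n$, then the based manifolds $(Y_n, b_n)$ would admit a geometric subsequential limit $Z$ distinct from $M_A$, $M_C$, and $\Mfcover$.  Tracking the combinatorial position of $b_n$ relative to the triply-punctured sphere frontier components of $\core(Y_n)$ coming from $f^{\pm n}(P)$ would force $Z$ to match one of the three known limits, a contradiction.  This global covering argument, rather than any individual bi-Lipschitz estimate, is where the real work lies.
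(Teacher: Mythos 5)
Your overall strategy matches the paper's in outline (re-mark $Y_n$ by powers of $f$, use Proposition~\ref{prop:limits} to get the three strong limits $M_A$, $M_C$, $\Mfcover$, define $B_n$ as a complement, and argue the injectivity bound by contradiction), but the step you yourself flag as ``where the real work lies'' is exactly the step you have not supplied, and your proposed substitute does not close it. The paper's key ingredient is Minsky's Bounded Geometry Theorem: it gives an $L$--bi-Lipschitz model map $\phi_n \colon [F_{-n},F_n] \to \core^0(Y_n)$ with $L$ depending only on $S$, so that \emph{every} point of $\core^0(Y_n)$ lies within the uniform distance $L\cdot\diam(B_f)$ of some fiber image $g_{n,k}(S)$, a surface whose marking is the standard one re-marked by $f^{k}$, hence identified with $M\bigl(f^{k-n}(P),f^{k+n}(P)\bigr)$. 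That uniform recentring is what lets one run the dichotomy ($n-|k_n|$ bounded versus unbounded) and apply Proposition~\ref{prop:limits} at the drifting basepoints. Your sketch --- take a geometric subsequential limit $Z$ of $(Y_n,b_n)$ for escaping $b_n$ and ``track the combinatorial position of $b_n$ relative to the triply-punctured sphere frontier components'' --- cannot work as stated: points deep in $B_n$ are at distance tending to infinity from those frontier spheres (indeed $\vol(B_n)\to\infty$), so their position relative to the frontier gives no control, and an unmarked geometric limit at such basepoints cannot be identified with $\Mfcover$ without some uniform bounded-geometry or marked-surface control; in particular nothing in your argument excludes a priori that new short geodesics appear in the middle of $Y_n$, which is precisely the assertion being proved. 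It also masks the volume claim's easy part: the paper gets $\vol(\core^0(Y_n))\ge (2n/L^3)\vol(M_f)$ (in effect) directly from the model map, whereas your version only needs geometric convergence, which is fine, but the injectivity statement genuinely needs the model.

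Two secondary problems. First, your choice $A := M\bigl(P,f^{k_0}(P)\bigr)$ requires a $(1+\epsilon/2)$--bi-Lipschitz embedding of (the core of) an \emph{approximant} into the limit $M_A$; strong convergence gives maps in the opposite direction only (compact subsets of the limit into the approximants), so this needs a separate argument. The paper avoids this by taking $A$ to be a compact piece of $\core(M_A)$ itself, cut off by a surface $F_A$ chosen deep enough in the end $E_A$ that $\inj_b(M_A) > \inj(\Mfcover) - \epsilon/2$ beyond it. Second, that choice of $F_A$ is not cosmetic: in the case where the bad points $p_n$ stay a bounded number of fibers from an end, the paper compares them with $E_A$ (via strong convergence of $M\bigl(P,f^{2n}(P)\bigr)$ to $M_A$), not with $\Mfcover$, and the injectivity bound there comes from how $A$ was cut. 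Your proposal fixes $k_0$ ``depending on $\epsilon$'' but never specifies the property of the cut that makes points of $B_n$ adjacent to $A_n$ have large injectivity radius, so even the boundary case of your intended dichotomy is incomplete.
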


\begin{remark}
  The theory of Kleinian surface groups provides considerable
  information about the manifolds $Y_n$; in particular, Minsky's
  Bounded Geometry Theorem \cite{Minsky2001} guarantees there is a
  {\em bi-Lipschitz model} for $\core^0(Y_n)$ which can be described
  as a union of finitely many copies of $B_f$, and the bi-Lipschitz
  constant depends only on the genus of the fiber $F$ (we give a more
  detailed discussion in the proof of Theorem~\ref{thm:maxcusps}).
  Because we wish to ensure that the injectivity radius on $B_n$ is
  large, the dependence of the bi-Lipschitz constant on the genus
  presents a difficulty, as the lower bound for the injectivity radius
  of $M_f$ would then also depend on the genus of $F$.  Nevertheless
  we use this bi-Lipschitz control as a starting point.
\end{remark} 

Before proving Theorem~\ref{thm:maxcusps}, we explain its connection
to the geometry of the manifolds $N_n$ from
Section~\ref{sec:construction} and how it proves
Lemma~\ref{lem:geom-of-cusped}.  

\begin{proof}[Proof of Lemma~\ref{lem:geom-of-cusped}]  We return to
  the notation from Section~\ref{sec:construction}.  Let $M^\pm$ be
the convex cores of the manifolds corresponding to the pared
manifolds $(H^\pm, P)$.  Let $D$ be the convex core of the hyperbolic
manifold corresponding to $U$, and $D_n$ be its remarking by $f^n$,
i.e.~let $D_n$ be the convex core of the
pared manifold
\[
U_n = \big( (S \times I) \setminus (f^n(\gamma) \times \{1/2\}), \  f^n(P), \ f^n(P) \big).
\]
Then $N_n$ is the union of the following pieces,
glued along their totally geodesic surface boundaries (since these are all
thrice-punctured spheres there are no moduli issues):
\[
N_n = M^+ \ \cup \ \core{\big( M(P, f^n(P) ) \big)} \ \cup \ D_n \ \cup \
\core \big( M(f(P^n), P ) \big) \ \cup \  M^-.
\]
The geometries of $M^\pm$ and $D_n$ are fixed, and in particular so
are their volumes.  The other pieces are remarkings of the manifolds of
Theorem~\ref{thm:maxcusps}, and hence for large $n$ have injectivity
radius at least $\inj(M_f) - \epsilon$ outside a set of uniformly
bounded volume.  This proves Lemma~\ref{lem:geom-of-cusped}.
\end{proof}

\begin{proof}[Proof of Theorem~\ref{thm:maxcusps}.] 
  The mapping torus $M_f$ is defined as $S \times [0, 1]$ where $(x,
  1) \sim (f(x), 0)$.  The cover $\Mfcover$ is thus $S \times \R$
  where the deck group is generated by the self-isometry $\alpha$
  sending $(x, t)$ to $\big(f^{-1}(x), t + 1\big)$. 
We take our preferred fiber $F$ in $M_f$ to be $S \times \{0\}$, and
the default marking $h_0 \maps S \to \Mfcover$ to be the inclusion of
$S$ as $S \times \{0\}$.  Note that the action of $f$ on $AH(S)$
commutes with the action by $\alpha$:
$$\alpha \circ h_0 \simeq  f \cdot h_0 = h_0 \circ f^{-1}.$$
Further, we denote by $F_k$ the translate $\alpha^k(F) = S \times
\{k\}$ of the fiber; compare the top of Figure~\ref{fig:maxcusps}.  For $k < k'$ we denote by $[F_k, F_{k'}]$ the
compact submanifold of $\Mfcover$ which is the complement
of the open infinite-volume components of $\Mfcover \setminus (F_k
\cup F_{k'})$.

 \begin{figure}[bt]
\begin{center}
\begin{tikzoverlay}[scale=0.825]{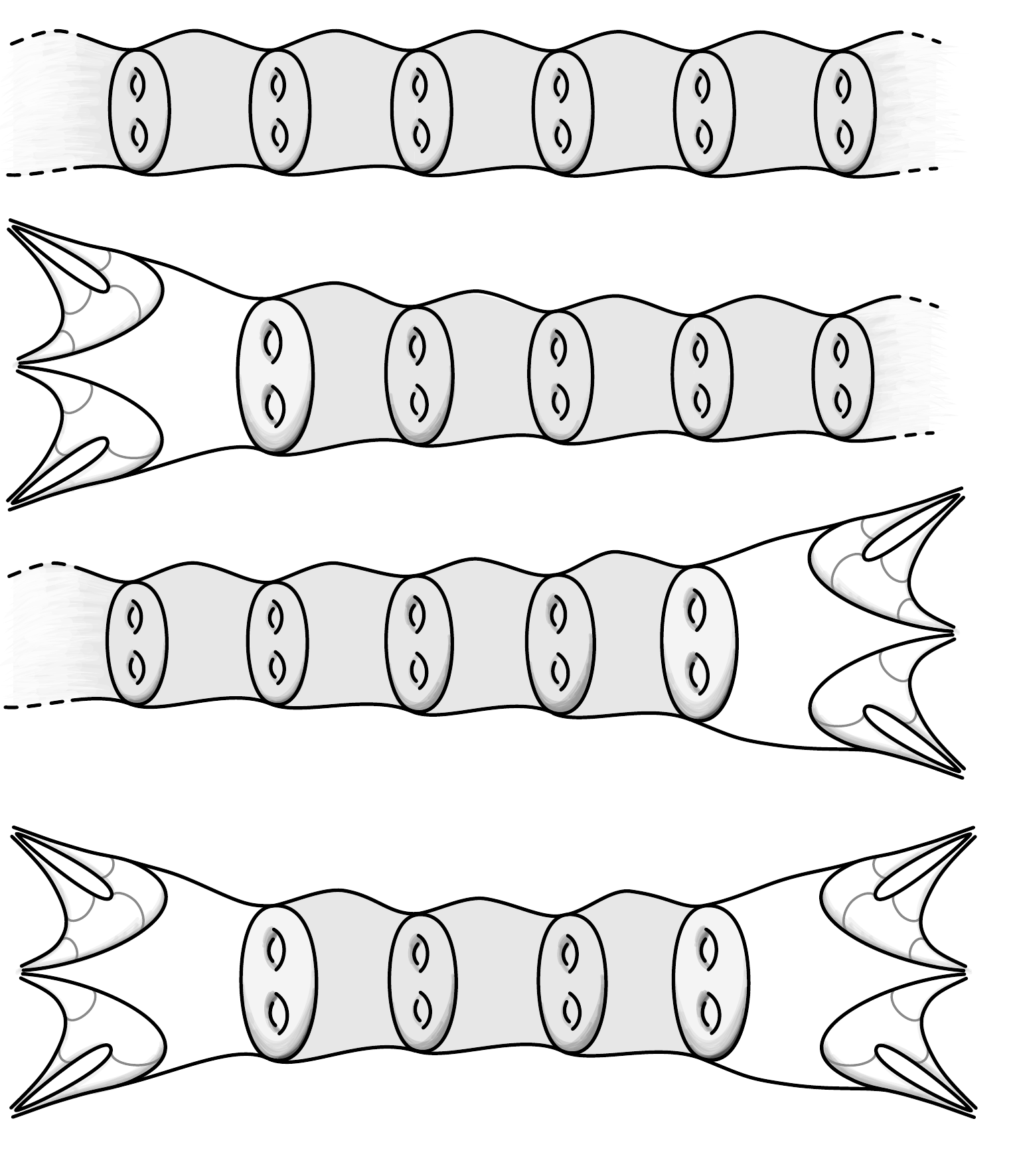}[]
    \node[left] at (0.3,105.9) {$\Mfcover$:};
    \node[left] at (4.3,85.4) {$\core(M_A)$:};
    \node[left] at (4.3,52.9) {$\core(M_C)$:};
    \node[left] at (0.0,20.1) {$Y_n:$};
    \begin{scope}[font=\small]
      \node[above=2pt] at (13.2,111.0) {$F_{-2}$};
      \node[above=2pt] at (27.0,111.0) {$F_{-1}$};
      \node[above=2pt] at (41.0,111.0) {$F_{0}$};
      \node[above=2pt] at (54.8,111.0) {$F_{1}$};
      \node[above=2pt] at (69.1,111.0) {$F_{2}$};
      \node[above=2pt] at (82.9,111.0) {\small $F_{3}$};
    \end{scope}
    \node[] at (17.5,79.7) {$A$};
    \node[above=2pt] at (26.5,86.6) {\small $F_A$};
    \node[] at (48.4,79.9) {$E_A$};
    \node[] at (48.5,53.4) {$E_C$};
    \node[above=2pt] at (68.3,60.0) {\small $F_C$};
    \node[] at (77.6,53.5) {$C$};
    \node[] at (18.0,20.0) {$A_n$};
    \node[] at (49.2,20.0) {$B_n$};
    \node[] at (79,19.6) {$C_n$};
\end{tikzoverlay}
\end{center}
\vspace{-1.3cm}
  \caption{The manifolds used in the proof of Theorem~\ref{thm:maxcusps}.}
  \label{fig:maxcusps}
\end{figure}  

We may consider the marking $h_k \colon S \to \Mfcover$ where $$h_k =
\alpha^k \circ h_0 \colon S \to \Mfcover.$$ Here, $h_k(S)$ is $F_k$ and as elements of $AH(S)$ we have
$$(h_k, \Mfcover) = f^k
(h_0,\Mfcover).$$

By the Bounded Geometry Theorem \cite{Minsky2001}, there is an $L$
depending only on $S$ so that for all large $n$ the manifold
$\core^0(Y_n)$ admits an $L$--bi-Lipschitz 
homeomorphism, or {\em model map},
$$\phi_n \colon [F_{-n},F_n] \to \core^0(Y_n).$$   
Since the volume of $[F_{-n},F_n]$ is $2 n \vol( M_f )$,
we have
$$\vol\left(\core^0(Y_n)\right) \to \infty  \mtext{as $n \to \infty$.}$$

The
homotopy class of $\phi_n$ is chosen so that $\phi_n \circ h_0$
corresponds to the standard marking on $Y_n$; in other words, as
elements of $AH(S)$ we have
$$(\phi_n \circ h_0, Y_n) = M\big(f^{-n}(P),f^n(P)\big).$$
For each integer $k$ with $|k| < n$, the copy of the fiber $F_k$
provides a marking for $Y_n$ via the model map $\phi_n$ by taking
$$\phi_n \compos h_k \colon S \to Y_n,$$ marked by the translate
$F_k$ in $[F_{-n},F_n]$.  Then we have
$$(\phi_n \compos h_k, Y_n) = f^k(\phi_n \compos h_0, Y_n).$$
Let
$$g_{n,k}  = \phi_n \compos h_k$$ denote this marking, and note that
$g_{n,0}$ corresponds to the standard marking of $Y_n$.

We note that for each $k$ with $|k| \le n$, the manifold $M\big(f^{-n +k}(P),
f^{n +k}(P)\big)$ is isometric to $M\big(f^{-n}(P), f^n(P)\big) = Y_n$.  In
particular, indexing the one-sided iterations by $M\big(P,f^{2n}(P)\big)$ and
$M\big(f^{-2n}(P),P\big)$ we obtain manifolds that are isometric to $Y_n$ by
the isometry $\alpha^n$ and $\alpha^{-n}$ respectively.

To prove the theorem, we start by describing $A_n$ and $C_n$.  
By
Proposition~\ref{prop:limits}, the sequences $\left\{ M\left(P, f^{2n}(P)
  \right) \right\}$ and $\left\{ M\left(f^{-2n}(P), (P) \right)
\right\}$ converge strongly to limits in $AH(S)$ with one end
asymptotically isometric to the positive end of $\Mfcover$ and the
negative end of $\Mfcover$ respectively.  The sequence
$\left\{ M\left(f^{-n}(P), f^n(P) \right) \right\}$ converges strongly
to $\Mfcover$ itself.

Let $M_A$ in $AH(S)$ be the strong limit of $M\big(P, f^{2n}(P)\big)$.
We now explain the needed decomposition of $M_A$ which is sketched in
Figure~\ref{fig:maxcusps}.  By Proposition~\ref{prop:limits} there is
an embedded surface $F_A$ in $\core(M_A)$, homotopic to the marking,
so that $F_A$ divides $\core(M_A)$ into a component $A$ with bounded
volume and an infinite-volume (neighborhood of an) end $E_A$ so that
$E_A$ is $1 + \epsilon/(2 \inj(M_f))$ bi-Lipschitz to (a neighborhood
of) the positive end of $\Mfcover$.  The finite-volume submanifold $A
\subset \core(M_A)$ has boundary
 $$ \bdry A = \bdry \core(M_A) \sqcup F_A.$$
In particular, $A$ is chosen so that we have 
\begin{equation}\label{eq:injEA}
\inj_b(M_A) > \inj(\Mfcover) - \epsilon/2 \mtext{for each $b \in
  E_A$.}
\end{equation}
We take $C$ to be the analogous subset of $M_C$, the limit
of $M\big(f^{-2n}(P), P\big)$ in $AS(S)$, cut off by a surface $F_C$;
see Figure~\ref{fig:maxcusps}.  

The intersection $A^0 = \core^0(M_A) \cap A$ being compact, the strong
convergence of $ M\left(P, f^{2n}(P) \right)$ to $M_A$ 
guarantees, for $n$ sufficiently large, smooth bi-Lipschitz embeddings 
$$\psi_{2n} \colon A^0 \to  M\left(P, f^{2n}(P) \right)$$ converging
to isometric embeddings.  We let $A_n$ be the bounded volume
submanifold of $M\left(P , f^{2n}(P)\right)$, which is isometric to
$Y_n$, cut off by the image $\psi_{2n}(F_A)$ and the convex core
boundary components corresponding to the negative end of
$M\left(P,f^{2n}(P)\right)$; compare Figure~\ref{fig:maxcusps}.
We define $C_n$ similarly and take
$$B_n = \core(Y_n) \setminus (A_n  \cup C_n).$$  

Since $\vol\big(\core(Y_n)\big)$ goes to infinity whereas 
$\vol(A_n)$ and $\vol(C_n)$ are uniformly bounded, it follows that $\vol(B_n) \to
\infty$ as $n \to \infty$, verifying the last sentence of
Theorem~\ref{thm:maxcusps}.  

We now show that for $n$ sufficiently large we have
$$\inj(B_n)> \inj(\Mfcover) -\epsilon.$$   
Assume otherwise,
and let $p_n$ be a sequence of points in $B_n$ for which
\begin{equation}
\label{bound}
\inj_{p_n}(Y_n)\le \inj(\Mfcover) -\epsilon.
\end{equation}
Then by the uniform density of the fibers $F_k$ in $[F_{-n},F_n]$ the
$L$--bi-Lipschitz model map 
$$\phi_n \colon [F_{-n}, F_n] \to \core^0(Y_n)$$
guarantees there is a sequence $\{k_n\}$ with $|k_n| < n$ so that
  $p_n$ lies at distance at most $L \cdot \diam(B_f)$ from the image 
$\phi_n(F_{k_n}) = g_{n,k_n}(S).$

The sequence $(g_{n,{k_n}},Y_n)$ 
in $AH(S)$ is represented by
remarking $Y_n$ by $f^{k_n}$.  Said differently, in $AH(S)$ we have
$$(g_{n,{k_n}},Y_n) = f^{k_n}(g_{n,0},Y_n)$$  
and $(g_{n,0},Y_n)$ represents the standard marking for which 
$$(g_{n,0},Y_n) = M\big(f^{-n}(P),f^n(P)\big).$$  

Since the basepoints $p_n$ lie at distance $L \cdot \diam(B_f)$ from
the marking surfaces $g_{n,{k_n}}(S)$, we may study the injectivity
radii at $p_n$ in terms of the limiting geometry of
$$(g_{n,{k_n}},Y_n) = 
M\big(f^{k_n -n}(P),f^{k_n +n}(P)\big).$$  
Our analysis breaks into two cases, depending on whether $n -
\abs{k_n}$ is bounded.

\emph{Case $n - |k_n|$ is unbounded.}  After passing to a subsequence
where $n - \abs{k_n} \to \infty$, Proposition~\ref{prop:limits} gives
that the sequence $(g_{n,{k_n}},Y_n)$ converges strongly to
$\Mfcover$.  As each $p_n$ lies at uniformly bounded distance of the
marking $g_{n,k_n}(S)$, there is a compact subset $K \subset \Mfcover$
and smooth embeddings $\psi_n \colon K \to Y_n$ converging to an
isometry so that $p_n \in \psi_n(K)$.

It follows that $\inj_{p_n}(Y_n) > \inj(\Mfcover) - \epsilon$ for $n$
sufficiently large contradicting assumption~(\ref{bound}).

\emph{Case $n - |k_n|$ is bounded.}  We first pass to a subsequence
where one of $n - k_n$ and $-n - k_n$ is bounded; for notational
simplicity we suppose $\abs{-n - k_n} < d$.  Then the basepoint $p_n$
lies within a uniformly bounded distance, namely $D = d \cdot L \cdot
\diam(B_f)$, of the marking surface $g_{n,-n}(S)$.  

We now employ the strong convergence of $M\big(P,f^{2n}(P)\big)$ to $M_A$. Let
$K \cong F_A \times [-1,1]$ denote a compact product neighborhood of
$F_A$ in $M_A$ containing the ball $B_{2D}(A^0)$.  By strong
convergence, we have bi-Lipschitz embeddings $\psi_n \colon K \to Y_n$
that send the neighborhood $K$ of $F_A$ to a neighborhood of the image
$\psi_n(F_A) \subset \bdry A_n$ by an orientation-preserving
diffeomorphism.  For $n$ sufficiently large, the embeddings $\psi_n$
extend to diffeomorphisms on all of $M_A$; in particular, the preimages
$\psi_n^{-1}(B_n)$ of the subsets $B_n$ lie in the positive end $E_A$
of $M_A$.

Now as each $p_n$ lies within distance $D$ of $g_{n,-n}(S)$ and the latter is
contained in $\psi_n(A^0)$, it follows that $p_n$ lies in $\psi_n(K)$ for all
large $n$. 
Our basepoints $p_n$ are in $B_n$ and hence as discussed we have
that $\psi_n^{-1}(p_n)$ lies in $E_A$.  Now by
(\ref{eq:injEA}) the injectivity radius of $E_A$ is at least
$\inj(\Mfcover) - \epsilon/2$.  Thus for large $n$ we must have
$\inj_{p_n}(Y_n) > \inj(\Mfcover) - \epsilon$ which again contradicts
assumption~(\ref{bound}).  

This shows that for sufficiently large $n$ we have $\inj_b(Y_n) >
\inj(M_f) - \epsilon$ for every $b \in B_n$, completing the proof of Theorem~\ref{thm:maxcusps}.
\end{proof}

\section{Experimental results}\label{sec:exp}

Recall that Conjecture~\ref{conj:BV} posits that for a suitable
tower $M_n$ of congruence covers of a fixed arithmetic manifold one
has 
\[
 6 \pi \cdot \frac{\log \big| H_1(M_n;\Z)_{\tors} \big|}{\vol(M_n)} \to 1.
\]
For a finite-volume hyperbolic 3-manifold (or 3-orbifold), define
\[
\TR(M) = 6 \pi \cdot \left( \frac{\log \big| H_1(M;\Z)_{\mathrm{tor}}
  \big| }{\vol(M)} \ - \ \frac{\log\big( \vol(M) \big)}{\vol(M)}\right).
\]
As the second term of $\TR(M)$ is asymptotically negligible as
$\vol(M) \to \infty$, Conjecture~\ref{conj:BV} is also equivalent to
$\TR(M_n) \to 1$.  The second term is included so that when $b_1(M) =
0$ we have that $\TR(M)$ is precisely $6 \pi \cdot \tau(M)$ by the Cheeger-M\"uller
formula (\ref{eqn:CM}).

\subsection{Twist-knot orbifolds}

First, we consider the 34 hyperbolic 3-orbifolds of Section~7 of
\cite{CalegariDunfield2006}.  These are topologically similar in that
they are all built from twist-knots, but some are arithmetic and
others are not.  As in \cite{CalegariDunfield2006}, we consider
$\Gamma_0$--type congruence covers of prime level, and 
explore what happens to $\TR(M)$ in these covers.  

Let us start with the 11 twist-knot orbifolds which are arithmetic.
Going through prime levels of norm in [500, 15{,}000] gave some 14{,}990
congruence covers of $\Gamma_0$--type, which are plotted in Figure~\ref{arithG};
as with the experiments of \cite{Sengun2011,PageBanff}, this data is
very consistent with Conjecture~\ref{conj:BV}.  Notice in
Figure~\ref{arithG} that the red dots ($b_1 > 0$) appear to be
somewhat lower (on average) than the blue dots ($b_1 = 0)$.  To
confirm this, we focus on the tail of 2{,}253 covers where $\vol(M) >
15{,}000$ and plot the distribution of $\TR$ for both types; see
Figure~\ref{arithH}.  This pattern is expected since when $b_1(M) >
0$ the analytic torsion $\tau(M)$ gets a contribution from the
regulator of $H^1(M)$; thus even if $\tau(M) \approx 1$ then $\TR(M)$
can be noticeably less than $1$.    Figure~\ref{arithB} further explores the
effect the size of $b_1$ on $\TR$.

Next, we consider the 23 twist-knot orbifolds which are
nonarithmetic.  In this case, there are some 31{,}391 congruence
covers of this type, which are plotted in Figure~\ref{nonarithG}.  Two things
are worth pointing out here.  The first is that when $b_1(M) = 0$ one
continues to have $\TR(M) \to 1$ as $\vol(M) \to \infty$, which is
strong evidence for Conjecture~\ref{conj:gentor} and also consistent
with the nonarithmetic examples of \cite{Sengun2012}.  Surprisingly,
the convergence of $\TR(M) \to 1$ appears to be
\emph{faster} than in the arithmetic case, as shown in
Figure~\ref{compH}.   The second thing is that when $b_1(M) > 0$ there
are examples where $\TR(M)$ is much less than 1 even
when the $\vol(M)$ is quite large; this suggests that
Conjecture~\ref{conj:BV} cannot be broadened to nonarithmetic
manifolds.  A more detailed look at the effect of $b_1$ on $\TR$
is given in Figure~\ref{nonarithB}.

\subsection{Covers of prime-power level}\label{sec:prime-power}

In the case of Bianchi manifolds, \Sengun\ \cite{SengunBanff}
discovered that for congruence covers of the form $\Gamma_0(\p^n)$
where $\p$ is a prime of small norm, then $\TR$ is much
smaller than in the prime-level case for covers of similar volume.
In particular, one hits a computational wall before getting convincing
evidence that $\TR \to 1$.  Here, we look at several closed arithmetic
examples which exhibit the same phenomenon; in one case, we are able
find a cover with $\TR \approx 1$ providing further evidence for
Conjecture~\ref{conj:BV}.  Part of the issue here is that these examples can have
a lot of $b_1(M)$ and hence potentially a large contribution to
$\tau(M)$ from the regulator of $H^1(M)$.

In order to tease apart the issues here, we start with some families
where $b_1(M) = 0$ for all the covers and hence $\TR(M) = 6 \pi \cdot
\tau(M)$.  Section 6.7 of \cite{CalegariDunfield2006} gives 19 closed
hyperbolic \3-manifolds (of which 3 are arithmetic) where there is
a prime $\p$ of norm 2 where the associated quaternion algebra
ramifies and moreover where $\pi_1(M)$ is 2-powerful.  Consequently,
by Theorem 6.3 of \cite{CalegariDunfield2006} the congruence covers of
level $\p^n$ all have $b_1(M) = 0$.  The data on 68 covers of these
manifolds is shown in Figure~\ref{ppower}.  The convergence of $\TR$
to $1$ seems reasonably convincing; for the 12 covers with volumes >
15{,}000, the values of $\TR$ are in $[1.000, 1.125]$.  This is still
slower than the convergence observed for covers of prime level,
especially considering that most of the manifolds here are
nonarithmetic; compare Figure~\ref{compH}.  Another arithmetic example
whose $\Gamma_0(\p^n)$--covers have $b_1 = 0$ for a prime of norm 2 is
given in Figure~\ref{cubic}; this example has the best convergence of
any tower of prime-power level that we found.  Some additional data
for other arithmetic manifolds and primes of norm 5 where again $b_1 = 0$
is given in Figure~\ref{quarticnob1}.

We turn now to five families of examples where the
$\Gamma_0(\p^n)$--covers have $b_1 > 0$ and hence the regulator term
of $\TR$ comes into play.  In each case, we start with the arithmetic
base orbifold coming from the elements of norm one in a maximal order
of a quaternion algebra $D$ over a field $K$.  The quaternion algebra
$D$ is ramified at all the real places of $K$ and at finitely many
primes of $K$ as specified in Figure~\ref{p-power-betti}.
Figure~\ref{p-power-betti} shows a marked correlation between the
amount of $b_1$ and how close $\TR$ is to $1$.   While the data is
not completely conclusive, except perhaps in the case of $M_1$, it is 
consistent with the conjecture that $\TR \to 1$.

\subsection{Computational notes}

The computations here were done with Magma \cite{Magma}.  The code for
building the covers of twist-knot orbifolds is available at
\cite{CDpaperwebsite}.  The base orbifolds for
Section~\ref{sec:prime-power} were constructed by the program
KleinianGroups \cite{KleinianGroups}.
\enlargethispage{0.5cm}
\vspace{-0.5cm}
\begin{figure}[!h]
\begin{center}
\begin{tikzoverlayabs}[width=4.0in]{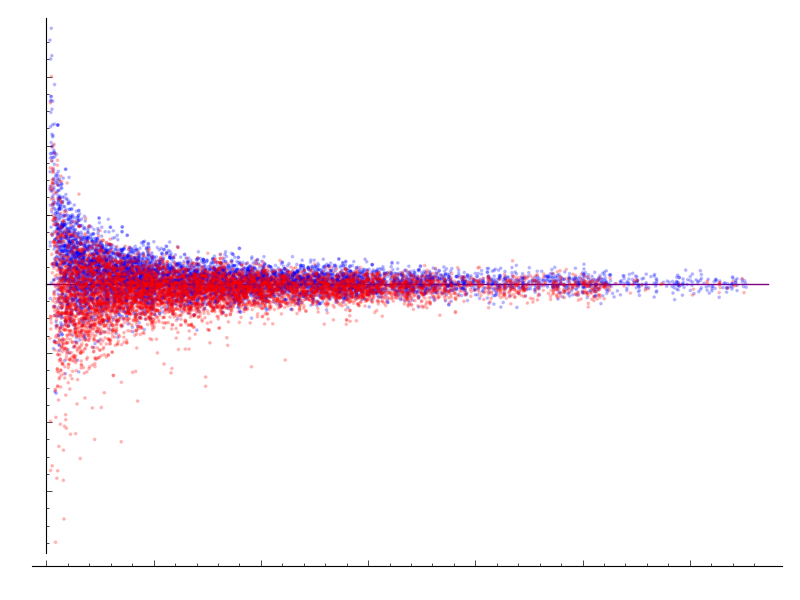}[font=\footnotesize]
\draw (0.058009281196067554, 0.046666666666666662) node[ below] {0};
\draw (0.19204596831096377, 0.046666666666666662) node[ below] {5{,}000};
\draw (0.32608265542585996, 0.046666666666666662) node[ below] {10{,}000};
\draw (0.46011934254075604, 0.046666666666666662) node[ below] {15{,}000};
\draw (0.59415602965565228, 0.046666666666666662) node[ below] {20{,}000};
\draw (0.72819271677054842, 0.046666666666666662) node[ below] {25{,}000};
\draw (0.86222940388544456, 0.046666666666666662) node[ below] {30{,}000};
\draw (0.051064836751623113, 0.18137711721535227) node[left ] {0.4};
\draw (0.051064836751623113, 0.29655019774185015) node[left ] {0.6};
\draw (0.051064836751623113, 0.41172327826834804) node[left ] {0.8};
\draw (0.051064836751623113, 0.52689635879484575) node[left ] {1};
\draw (0.051064836751623113, 0.64206943932134364) node[left ] {1.2};
\draw (0.051064836751623113, 0.75724251984784152) node[left ] {1.4};
\draw (0.051064836751623113, 0.87241560037433941) node[left ] {1.6};
\draw (0.95, 0.1) node {\small $\vol(M)$};
\draw (0.07, 0.90) node[right] {\small $\TR(M)$};
\end{tikzoverlayabs}
\end{center}
\vspace{-0.6cm}
\caption{Congruence covers of arithmetic twist-knot
  orbifolds.  The blue dots are covers where
$b_1 = 0$ and the red dots covers where $b_1 > 0$.}\label{arithG}

\vspace{-0.1cm}

\begin{center}
\begin{tikzoverlayabs}[width=4.0in]{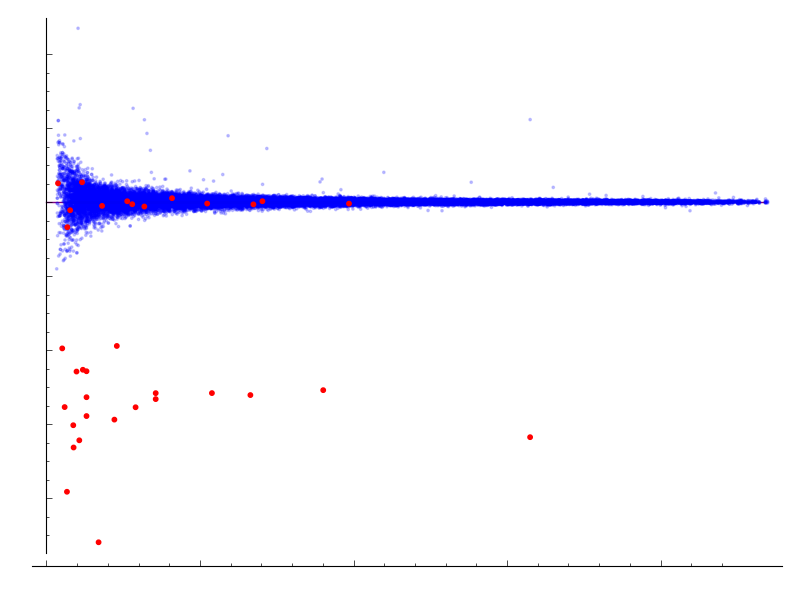}[font=\footnotesize]
  \draw (0.058009281196067554, 0.046666666666666662) node[ below] {0};
  \draw (0.24994297670598017, 0.046666666666666662) node[ below] {10{,}000};
  \draw (0.44187667221589283, 0.046666666666666662) node[ below] {20{,}000};
  \draw (0.63381036772580535, 0.046666666666666662) node[ below] {30{,}000};
  \draw (0.82574406323571814, 0.046666666666666662) node[ below] {40{,}000};
  \draw (0.051064836751623113, 0.1698634692118671) node[left ] {0.2};
  \draw (0.051064836751623113, 0.2931788960722031) node[left ] {0.4};
  \draw (0.051064836751623113, 0.41649432293253907) node[left ] {0.6};
  \draw (0.051064836751623113, 0.5398097497928751) node[left ] {0.8};
  \draw (0.051064836751623113, 0.66312517665321102) node[left ] {1};
  \draw (0.051064836751623113, 0.78644060351354717) node[left ] {1.2};
  \draw (0.051064836751623113, 0.90975603037388308) node[left ] {1.4};
  \draw (0.95, 0.1) node {\small $\vol(M)$};
  \draw (0.07, 0.90) node[right] {\small $\TR(M)$};
\end{tikzoverlayabs}
\end{center}
\vspace{-0.6cm}
\caption{Congruence covers of nonarithmetic twist-knot orbifolds; as
  before, blue dots indicate $b_1 = 0$ and red dots $b_1 > 0$.}\label{nonarithG}

\end{figure}

\begin{figure}
\begin{center}
\begin{tikzoverlayabs}[width=3in]{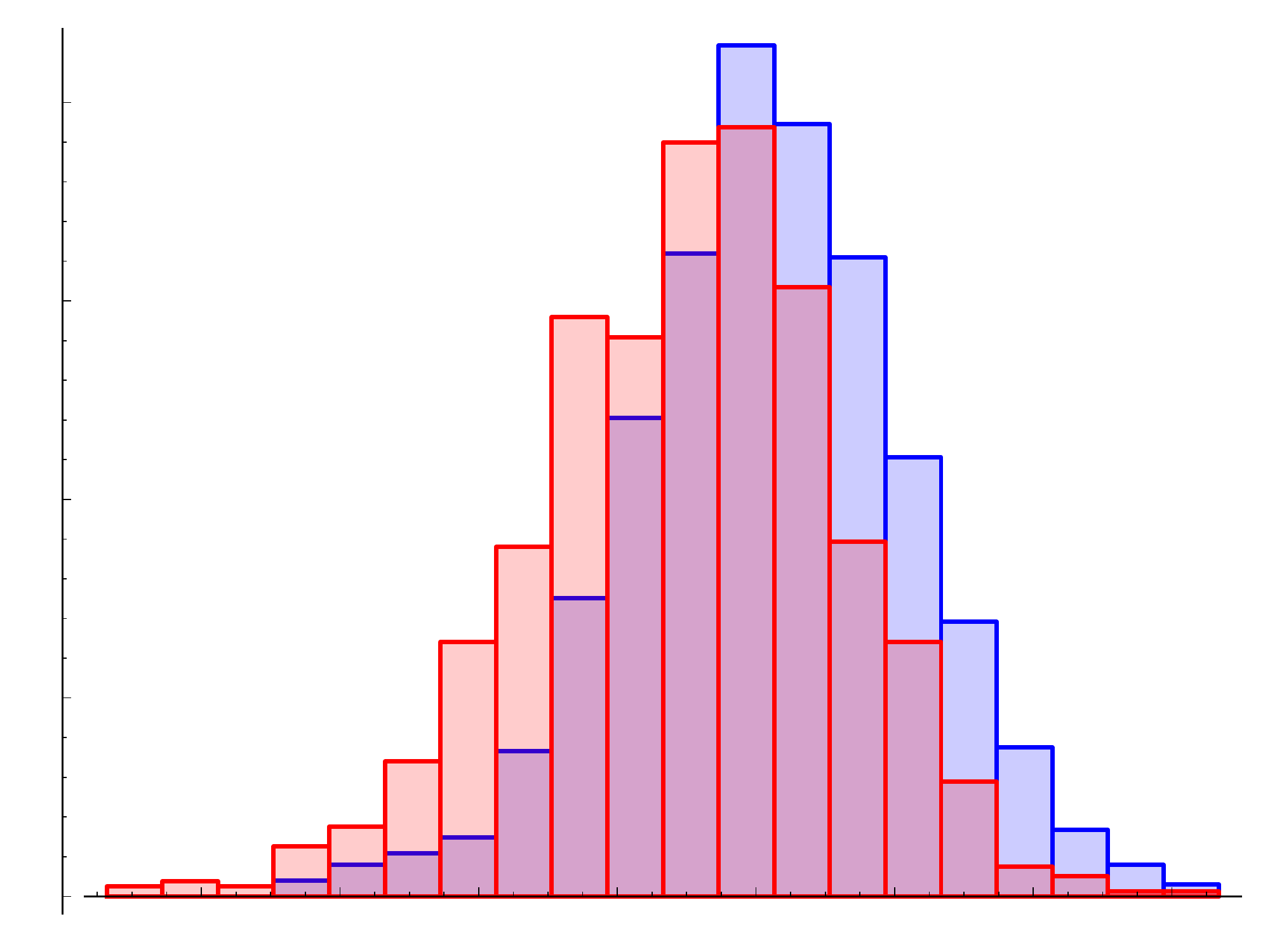}[font=\footnotesize]
  \draw (0.5, 0.0) node[below=0.05] {\small $\TR(M)$};
  \draw (0.15855822448850604, 0.049156256848564499) node[ below] {0.92};
  \draw (0.26773567095930062, 0.049156256848564499) node[ below] {0.94};
  \draw (0.37691311743009381, 0.049156256848564499) node[ below] {0.96};
  \draw (0.48609056390088767, 0.049156256848564499) node[ below] {0.98};
  \draw (0.59526801037168153, 0.049156256848564499) node[ below] {1};
  \draw (0.7044454568424755, 0.049156256848564499) node[ below] {1.02};
  \draw (0.81362290331326936, 0.049156256848564499) node[ below] {1.04};
  \draw (0.92280034978406322, 0.049156256848564499) node[ below] {1.06};
  \draw (0.042226562500000009, 0.058415516107823751) node[left ] {0};
  \draw (0.042226562500000009, 0.26691412975314577) node[left ] {5};
  \draw (0.042226562500000009, 0.47541274339846779) node[left ] {10};
  \draw (0.042226562500000009, 0.6839113570437898) node[left ] {15};
  \draw (0.042226562500000009, 0.89240997068911176) node[left ] {20};
\end{tikzoverlayabs}
\end{center}
\vspace{-0.4cm}
\caption{Histogram for $\TR(M)$ for arithmetic covers of twist-knot
  orbifolds with $\vol(M) > 15{,}000$; as before, red is $b_1 > 0$ and blue $b_1 = 0$.
}\label{arithH}

  \begin{center}
    \begin{tikzoverlayabs}[width=4.5in]{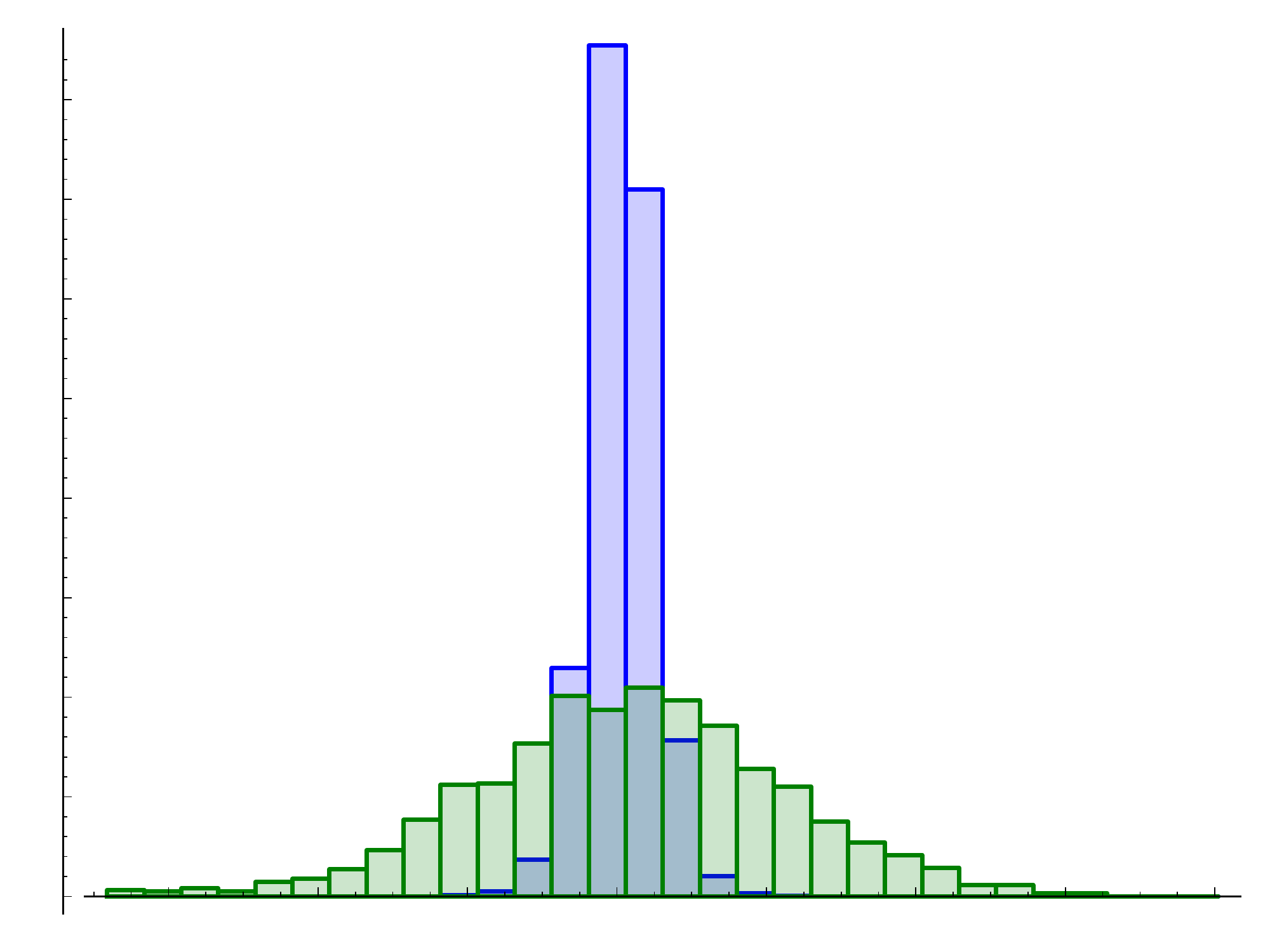}[\footnotesize]
      \draw (0.13273441558302465, 0.049156256848564499) node[ below] {0.94};
      \draw (0.25042290663300903, 0.049156256848564499) node[ below] {0.96};
      \draw (0.36811139768299345, 0.049156256848564499) node[ below] {0.98};
      \draw (0.48579988873297852, 0.049156256848564499) node[ below] {1};
      \draw (0.60348837978296366, 0.049156256848564499) node[ below] {1.02};
      \draw (0.72117687083294868, 0.049156256848564499) node[ below] {1.04};
      \draw (0.83886536188293315, 0.049156256848564499) node[ below] {1.06};
      \draw (0.95655385293291817, 0.049156256848564499) node[ below] {1.08};
      \draw (0.042617187499999987, 0.058415516107823751) node[left ] {0};
      \draw (0.042617187499999987, 0.16302416832852923) node[left ] {10};
      \draw (0.042617187499999987, 0.26763282054923465) node[left ] {20};
      \draw (0.042617187499999987, 0.37224147276994018) node[left ] {30};
      \draw (0.042617187499999987, 0.47685012499064555) node[left ] {40};
      \draw (0.042617187499999987, 0.58145877721135109) node[left ] {50};
      \draw (0.042617187499999987, 0.68606742943205645) node[left ] {60};
      \draw (0.042617187499999987, 0.79067608165276193) node[left ] {70};
      \draw (0.042617187499999987, 0.89528473387346741) node[left ] {80};
      \draw (0.5, 0.0) node[below=5pt] {\small $\TR(M)$};
    \end{tikzoverlayabs}
  \end{center}
  \vspace{-0.4cm}
  \caption{Histograms for covers where $\vol(M) > 15{,}000$.  In blue are  all the nonarithmetic covers (with two outliers removed), and in green are arithmetic covers with $b_1 = 0$.}
    \label{compH}
\end{figure}

\begin{figure}[htb]
  \begin{center}
  \begin{tikzoverlayabs}[width=4.4in]{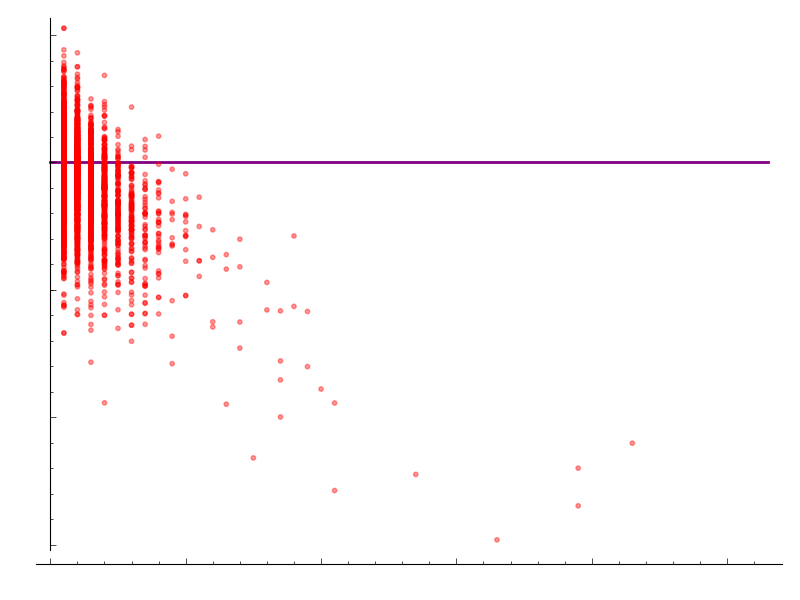}[font=\footnotesize]
    \draw (0.062941860823471404, 0.050833333333333314) node[ below] {0};
    \draw (0.23211650736001019, 0.050833333333333314) node[ below] {10};
    \draw (0.40129115389654901, 0.050833333333333314) node[ below] {20};
    \draw (0.5704658004330877, 0.050833333333333314) node[ below] {30};
    \draw (0.73964044696962661, 0.050833333333333314) node[ below] {40};
    \draw (0.90881509350616518, 0.050833333333333314) node[ below] {50};
    \draw (0.055997416379026971, 0.092380061474776912) node[left ] {0.7};
    \draw (0.055997416379026971, 0.30465331128737505) node[left ] {0.8};
    \draw (0.055997416379026971, 0.51692656109997326) node[left ] {0.9};
    \draw (0.055997416379026971, 0.72919981091257113) node[left ] {1};
    \draw (0.055997416379026971, 0.94147306072516934) node[left ] {1.1};
    \draw (0.95, 0.1) node {\small $b_1(M)$};
    \draw (0.1, 0.95) node[right] {\small $\TR(M)$};
  \end{tikzoverlayabs}
  \vspace{-0.2cm}
  \end{center}
  \caption{The relationship between $\TR(M)$ and $b_1(M)$ for covers of
   arithmetic twist-knot orbifolds where $b_1(M) > 0$.  Excludes
   covers of volume less than 5{,}000.}
 \label{arithB}

  \begin{center}
    \begin{tikzoverlayabs}[width=4.4in]{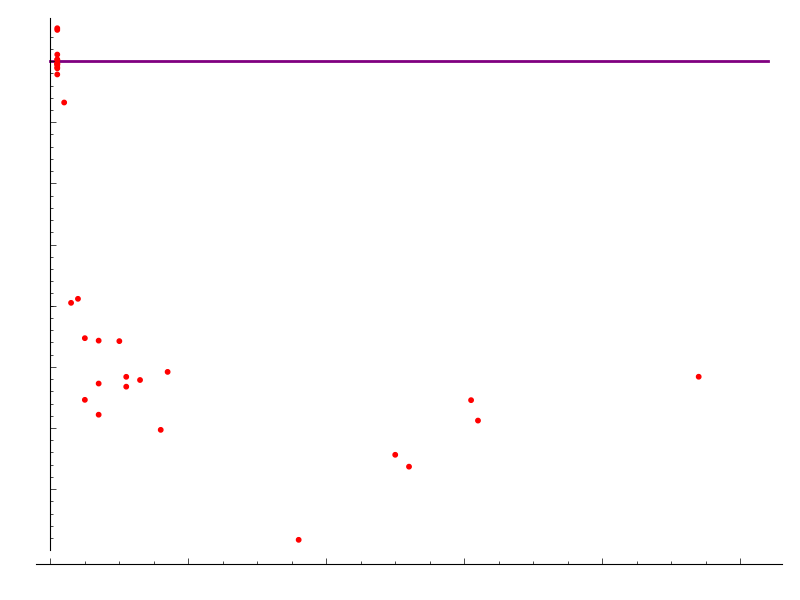}[\footnotesize]
      \draw (0.062965805384769466, 0.050833333333333314) node[ below] {0};
      \draw (0.23538929607391557, 0.050833333333333314) node[ below] {20};
      \draw (0.4078127867630616, 0.050833333333333314) node[ below] {40};
      \draw (0.58023627745220763, 0.050833333333333314) node[ below] {60};
      \draw (0.75265976814135371, 0.050833333333333314) node[ below] {80};
      \draw (0.92508325883049991, 0.050833333333333314) node[ below] {100};
      \draw (0.056021360940325032, 0.18478196729774718) node[left ] {0.3};
      \draw (0.056021360940325032, 0.28670103587709317) node[left ] {0.4};
      \draw (0.056021360940325032, 0.38862010445643919) node[left ] {0.5};
      \draw (0.056021360940325032, 0.49053917303578531) node[left ] {0.6};
      \draw (0.056021360940325032, 0.59245824161513116) node[left ] {0.7};
      \draw (0.056021360940325032, 0.69437731019447735) node[left ] {0.8};
      \draw (0.056021360940325032, 0.79629637877382342) node[left ] {0.9};
      \draw (0.056021360940325032, 0.89821544735316927) node[left ] {1};
      \draw (0.95, 0.1) node {\small $b_1(M)$};
      \draw (0.1, 0.95) node[right] {\small $\TR(M)$};
    \end{tikzoverlayabs}
    \end{center}
    \vspace{-0.2cm}
    \caption{Covers of nonarithmetic twist-knot orbifolds with $b_1 >0$. }
    \label{nonarithB}
\end{figure}

\begin{figure}
  \begin{center}
    \begin{tikzoverlayabs}[width=4.6in]{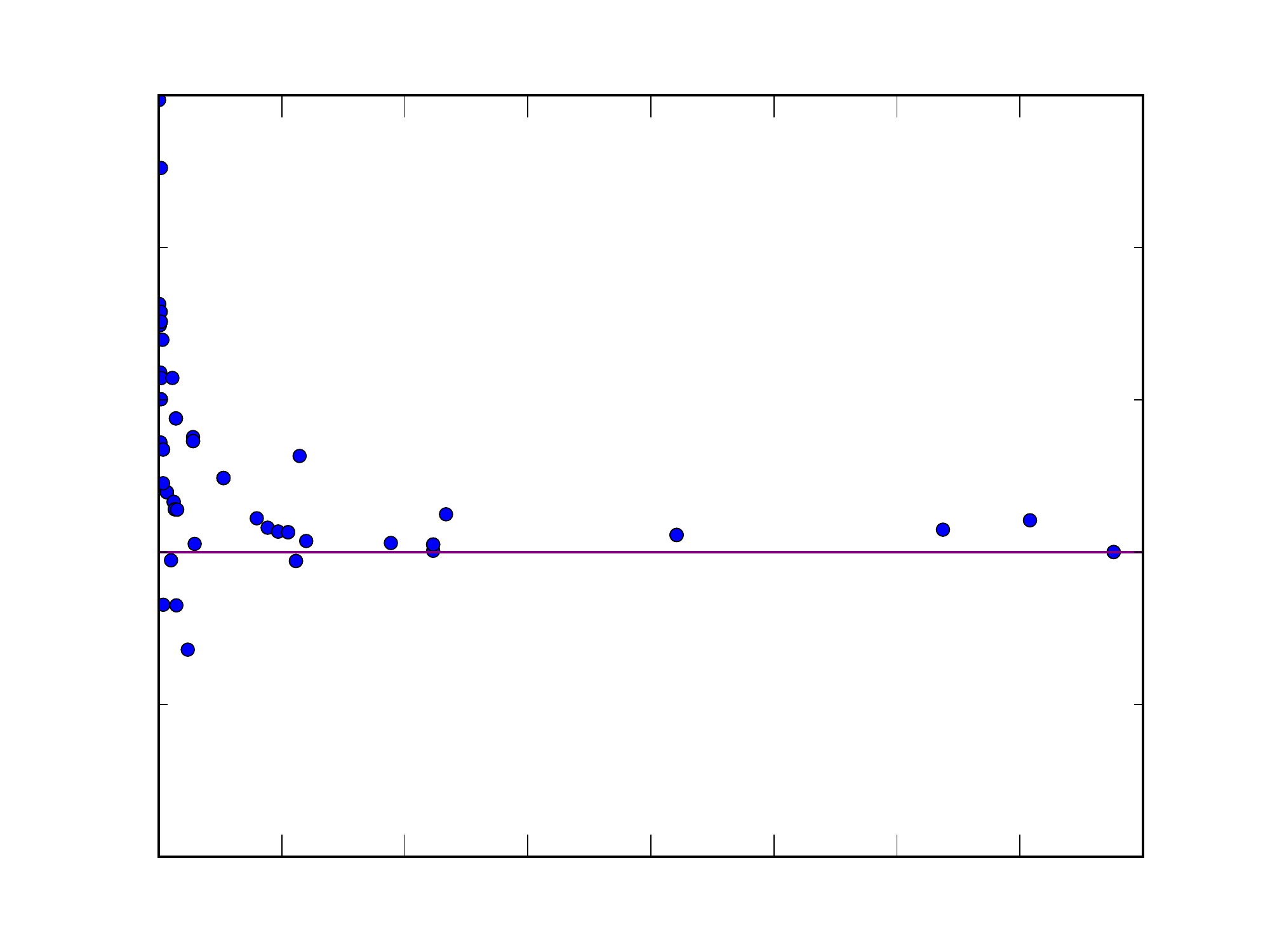}[\footnotesize]
      \draw (0.512500, 0.02) node[ below] {$\vol(M)$};
      \draw (0.125000, 0.090741) node[ below] {$0$};
      \draw (0.221875, 0.090741) node[ below] {$10{,}000$};
      \draw (0.318750, 0.090741) node[ below] {$20{,}000$};
      \draw (0.415625, 0.090741) node[ below] {$30{,}000$};
      \draw (0.512500, 0.090741) node[ below] {$40{,}000$};
      \draw (0.609375, 0.090741) node[ below] {$50{,}000$};
      \draw (0.706250, 0.090741) node[ below] {$60{,}000$};
      \draw (0.803125, 0.090741) node[ below] {$70{,}000$};
      \draw (0.900000, 0.090741) node[ below] {$80{,}000$};
      \draw (0.071094, 0.500000) node[left ] {$\TR(M)$};
      \draw (0.118056, 0.100000) node[left ] {$0.0$};
      \draw (0.118056, 0.260000) node[left ] {$0.5$};
      \draw (0.118056, 0.420000) node[left ] {$1.0$};
      \draw (0.118056, 0.580000) node[left ] {$1.5$};
      \draw (0.118056, 0.740000) node[left ] {$2.0$};
      \draw (0.118056, 0.900000) node[left ] {$2.5$};
    \end{tikzoverlayabs}
    \begin{tikzoverlayabs}[width=4.6in]{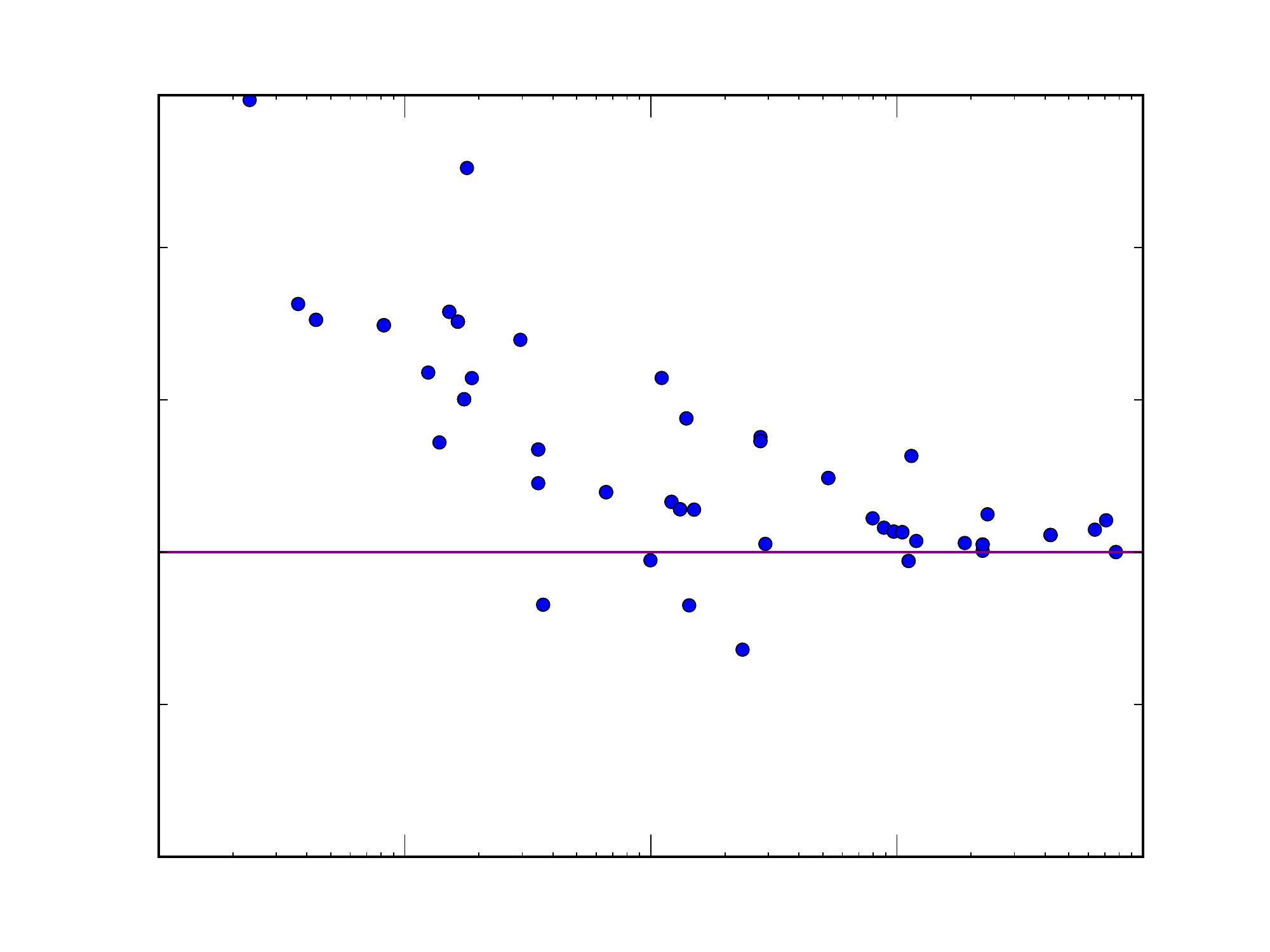}[\footnotesize]
      \draw (0.512500, 0.02) node[ below] {$\vol(M)$};
      \draw (0.125000, 0.090741) node[ below] {${10^{1}}$};
      \draw (0.318750, 0.090741) node[ below] {${10^{2}}$};
      \draw (0.512500, 0.090741) node[ below] {${10^{3}}$};
      \draw (0.706250, 0.090741) node[ below] {${10^{4}}$};
      \draw (0.900000, 0.090741) node[ below] {${10^{5}}$};
      \draw (0.071094, 0.500000) node[left ] {$\TR(M)$};
      \draw (0.118056, 0.100000) node[left ] {$0.0$};
      \draw (0.118056, 0.260000) node[left ] {$0.5$};
      \draw (0.118056, 0.420000) node[left ] {$1.0$};
      \draw (0.118056, 0.580000) node[left ] {$1.5$};
      \draw (0.118056, 0.740000) node[left ] {$2.0$};
      \draw (0.118056, 0.900000) node[left ] {$2.5$};
    \end{tikzoverlayabs}
  \end{center}

\caption{Regular congruence covers of level $\p^n$ where $N(\p) = 2$.
  The data is the same in both plots, the only difference being
  whether the volume axis has a log scale.}\label{ppower}
\end{figure}

\begin{figure}
  \begin{center}
    \begin{tikzoverlayabs}[width=4.2in]{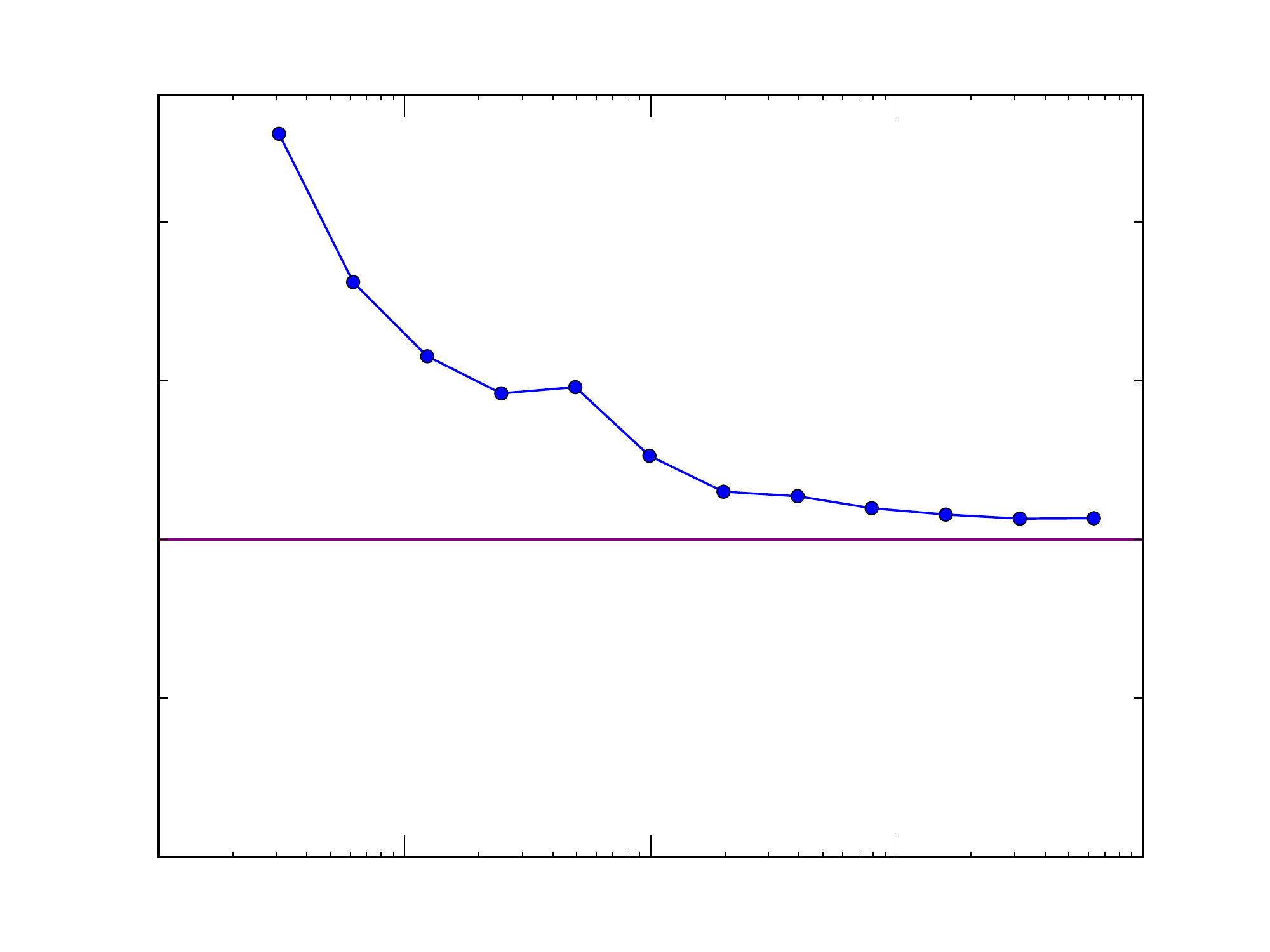}[\footnotesize]
      \draw (0.512500, 0.02) node[ below] {$\vol(M)$};
      \draw (0.125000, 0.090741) node[ below] {${10^{1}}$};
      \draw (0.318750, 0.090741) node[ below] {${10^{2}}$};
      \draw (0.512500, 0.090741) node[ below] {${10^{3}}$};
      \draw (0.706250, 0.090741) node[ below] {${10^{4}}$};
      \draw (0.900000, 0.090741) node[ below] {${10^{5}}$};
      \draw (0.098828, 0.500000) node[left ] {$\TR(M)$};
      \draw (0.118056, 0.100000) node[left ] {0.0};
      \draw (0.118056, 0.266667) node[left ] {0.5};
      \draw (0.118056, 0.433333) node[left ] {1.0};
      \draw (0.118056, 0.600000) node[left ] {1.5};
      \draw (0.118056, 0.766667) node[left ] {2.0};
  \end{tikzoverlayabs}
\end{center}


\caption{The base orbifold $M$ is arithmetic of the following form.
    The field $K$ has defining polynomial $x^3 + 2x - 1$ and the
    quaternion algebra $D$ is ramified at the real place of $K$ and
    the unique prime of norm 4. The orbifold $M$ corresponds to
    elements of norm one in a maximal order in $D$.  Congruence covers
    of are type $\Gamma_0(\p^n)$ where $\p$ is the prime of norm 2.  The
    values of $\TR$ in the tail are $< 1.07$; compare with
    Figure~\ref{compH}.}\label{cubic}
\end{figure}

\begin{figure}
  \begin{center}
    \begin{tikzoverlayabs}[width=4.4in]{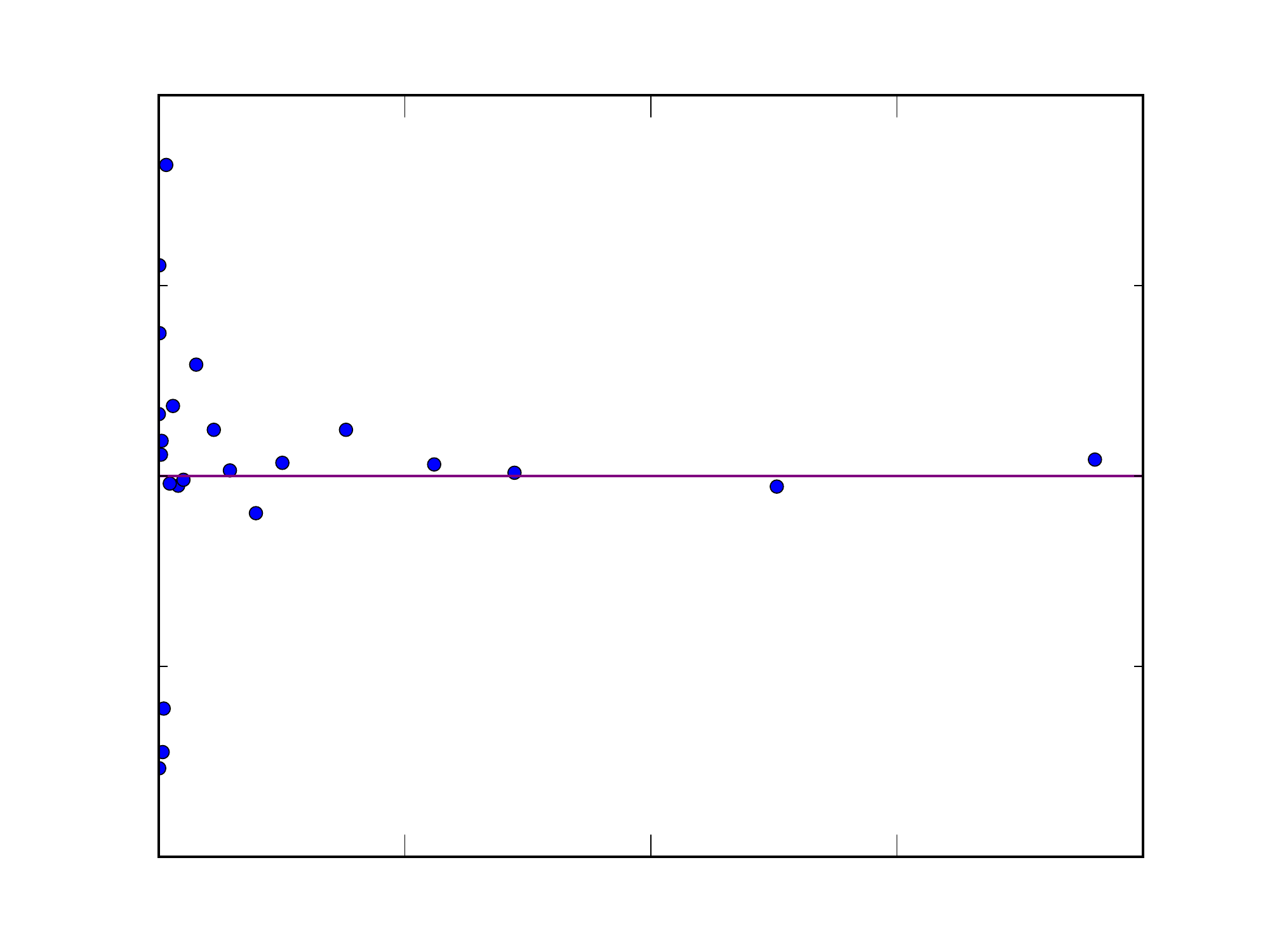}[\footnotesize]
      \draw (0.512500, 0.03) node[ below] {\small $\vol(M)$};
      \draw (0.125000, 0.090741) node[ below] {0};
      \draw (0.318750, 0.090741) node[ below] {5{,}000};
      \draw (0.512500, 0.090741) node[ below] {10{,}000};
      \draw (0.706250, 0.090741) node[ below] {15{,}000};
      \draw (0.900000, 0.090741) node[ below] {20{,}000};
      \draw (0.098828, 0.600000) node[left ] {\small $\TR(M)$};
      \draw (0.118056, 0.100000) node[left ] {0.0};
      \draw (0.118056, 0.300000) node[left ] {0.5};
      \draw (0.118056, 0.500000) node[left ] {1.0};
      \draw (0.118056, 0.700000) node[left ] {1.5};
      \draw (0.118056, 0.900000) node[left ] {2.0};
   \end{tikzoverlayabs}
   
   \begin{tikzoverlayabs}[width=4.4in]{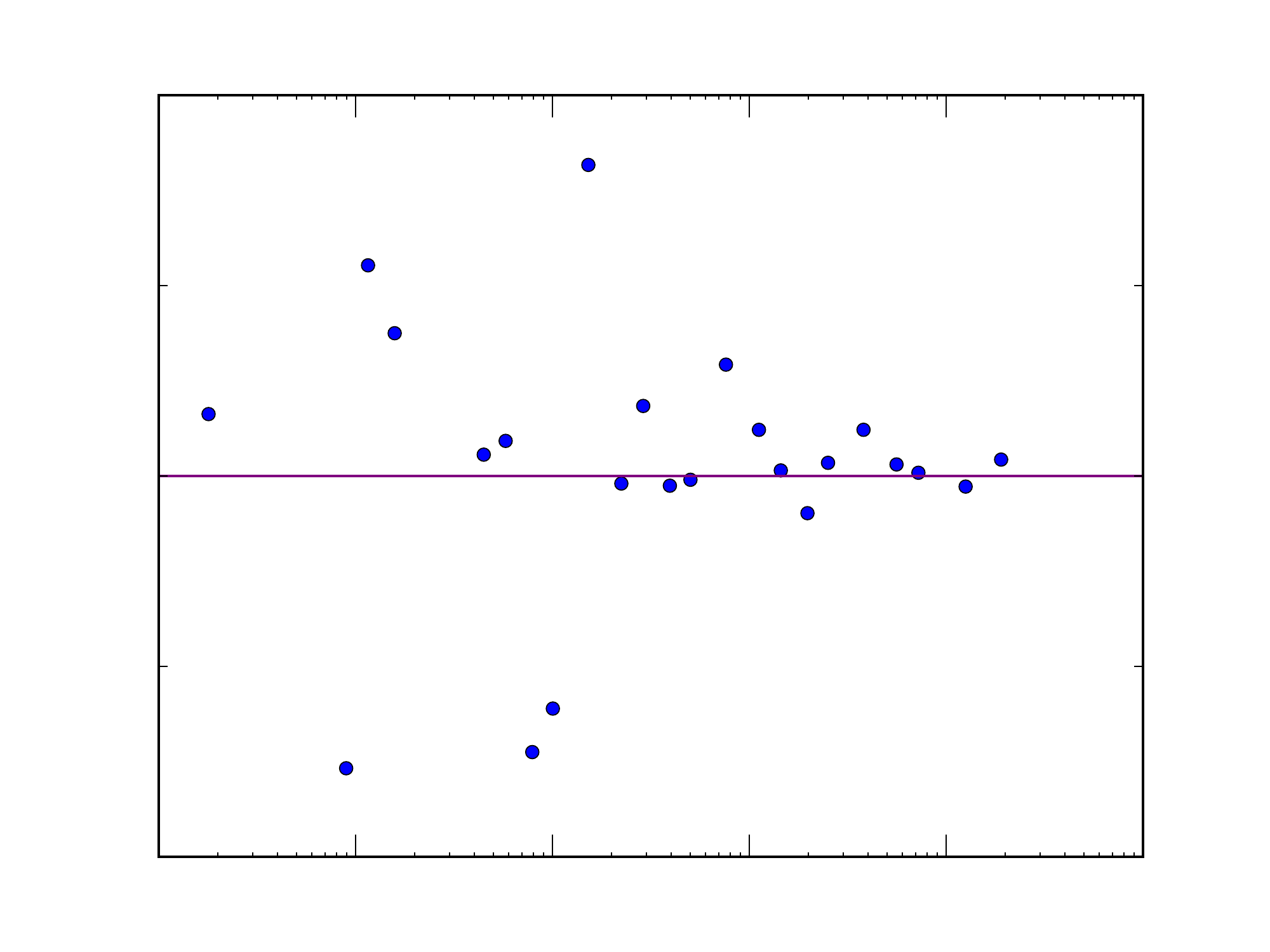}[\footnotesize]
      \draw (0.512500, 0.03) node[ below] {\small $\vol(M)$};
     \draw (0.125000, 0.090741) node[ below] {${10^{0}}$};
     \draw (0.280000, 0.090741) node[ below] {${10^{1}}$};
     \draw (0.435000, 0.090741) node[ below] {${10^{2}}$};
     \draw (0.590000, 0.090741) node[ below] {${10^{3}}$};
     \draw (0.745000, 0.090741) node[ below] {${10^{4}}$};
     \draw (0.900000, 0.090741) node[ below] {${10^{5}}$};
     \draw (0.098828, 0.600000) node[left ] {\small $\TR(M)$};
     \draw (0.118056, 0.100000) node[left ] {0.0};
     \draw (0.118056, 0.300000) node[left ] {0.5};
     \draw (0.118056, 0.500000) node[left ] {1.0};
     \draw (0.118056, 0.700000) node[left ] {1.5};
     \draw (0.118056, 0.900000) node[left ] {2.0};
   \end{tikzoverlayabs}
 \end{center}

\caption{Regular congruence covers of level $\p^n$ where $N(\p) = 5$.
  The data is the same in both plots, the only difference being
  whether the volume axis has a log scale.   The base orbifolds come
  from quaternion algebras over small quartic fields which ramify precisely
  at the two real places of the base field; all these covers have $b_1
  = 0$. 
}\label{quarticnob1}
\end{figure}

\begin{figure}
  \begin{center}
  \begin{tikzoverlayabs}[width=3.5in]{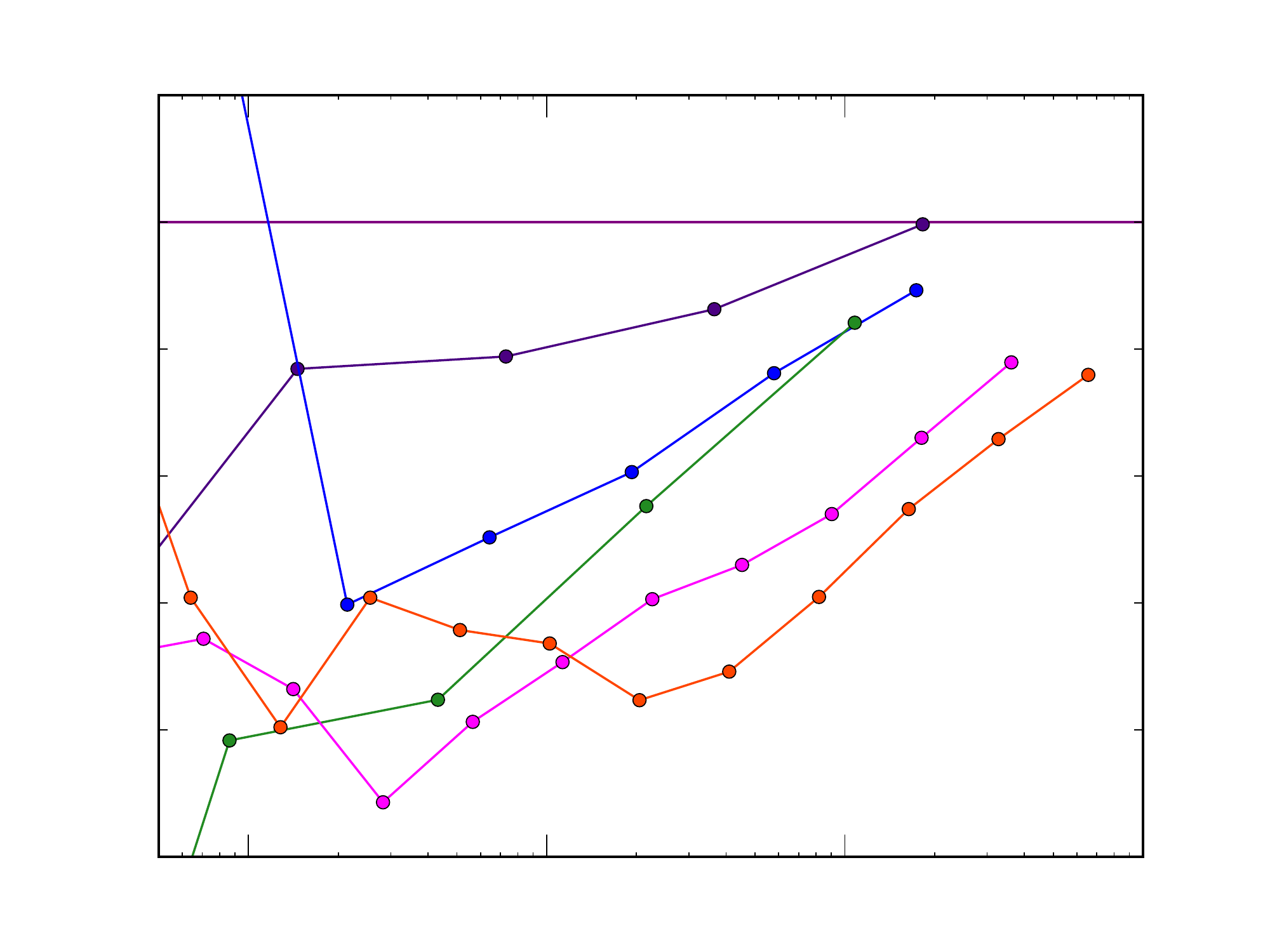}[\footnotesize]
  \draw (0.512500, 0.042083) node[below=5pt] {\small $\vol(M)$};
  \begin{scope}[xshift=2pt]
    \draw (0.195674, 0.090741) node[below] {${10^{2}}$};
    \draw (0.430450, 0.090741) node[below] {${10^{3}}$};
    \draw (0.665225, 0.090741) node[below] {${10^{4}}$};
    \draw (0.900000, 0.090741) node[below] {${10^{5}}$};
  \end{scope}
  \draw[color=white] (0.060938, 0.500000) node[left] {\small $b_1(M)$};
  \draw (0.010898, 0.600000) node[left, rotate=90] {\small $\TR(M)$};
  \begin{scope}[yshift=0.5pt]
    \draw (0.118056, 0.100000) node[left] {0.0};
    \draw (0.118056, 0.233333) node[left] {0.2};
    \draw (0.118056, 0.366667) node[left] {0.4};
    \draw (0.118056, 0.500000) node[left] {0.6};
    \draw (0.118056, 0.633333) node[left] {0.8};
    \draw (0.118056, 0.766667) node[left] {1.0};
    \draw (0.900000, 0.233333) node[right] {0.2};
    \draw (0.900000, 0.366667) node[right] {0.4};
    \draw (0.900000, 0.500000) node[right] {0.6};
    \draw (0.900000, 0.633333) node[right] {0.8};
    \draw (0.900000, 0.766667) node[right] {1.0};
  \end{scope}
  \node[above] at (0.727,0.764) {$M_1$};
  \node[right] at (0.722,0.694) {$M_2$};
  \node[below] at (0.694,0.661) {$M_3$};
  \node[above] at (0.820,0.619) {$M_4$};
  \node[below=3pt] at (0.857,0.606) {$M_5$};
\end{tikzoverlayabs}

\begin{tikzoverlayabs}[width=3.5in]{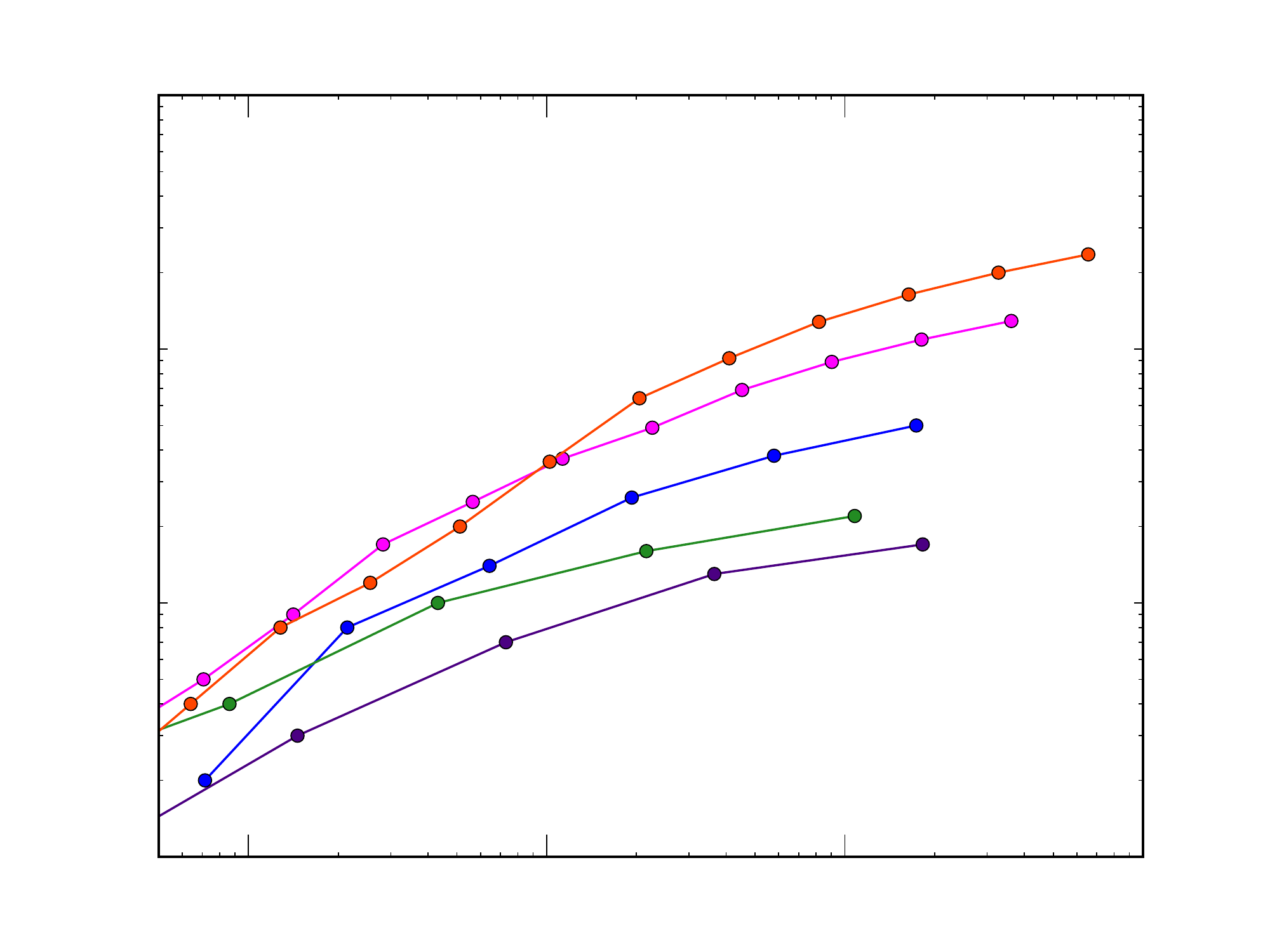}[\footnotesize]
  \draw (0.512500, 0.042083) node[below=5pt] {\small $\vol(M)$};
  \begin{scope}[xshift=2pt]
    \draw (0.195674, 0.090741) node[below] {${10^{2}}$};
    \draw (0.430450, 0.090741) node[below] {${10^{3}}$};
    \draw (0.665225, 0.090741) node[below] {${10^{4}}$};
    \draw (0.900000, 0.090741) node[below] {${10^{5}}$};
  \end{scope}
  \draw (0.060938, 0.500000) node[left] {\small $b_1(M)$};
  \begin{scope}[yshift=0.5pt]
    \draw (0.118056, 0.100000) node[left] {${10^{0}}$};
    \draw (0.118056, 0.366667) node[left] {${10^{1}}$};
    \draw (0.118056, 0.633333) node[left] {${10^{2}}$};
    \draw (0.118056, 0.900000) node[left] {${10^{3}}$};
    \draw (0.900000, 0.366667) node[right] {${10^{1}}$};
    \draw (0.900000, 0.633333) node[right] {${10^{2}}$};
    \draw (0.900000, 0.900000) node[right] {${10^{3}}$};
  \end{scope}
  \node[below] at (0.747,0.426) {$M_1$};
  \node[right] at (0.721,0.563) {$M_2$};
  \node[right] at (0.673,0.479) {$M_3$};
  \node[right] at (0.797,0.664) {$M_4$};
  \node[above] at (0.857,0.734) {$M_5$};
\end{tikzoverlayabs}

\vspace{0.4cm}
{\small
\renewcommand{\arraystretch}{1.2}
\begin{tabular}{c|l|c|c|c|c}
  & Defining poly of $K$ & $\Delta_K$ & $\mathrm{Ram}_{\mathit{finite}}(D)$ & $\p$ &
  volume \\ \hline
  $M_1$ & $x^4 - x^3 - 3 x^2 - x + 1$ & $-1323$ & $\emptyset$ & $\q_5$
  & $0.9732...$\\
  $M_2$ & $x^3 - 2x - 2$ & $-76$ & $\{\q_2\}$ & $\q_3$ & $0.6617...$ \\\hline
  $M_3$ & $x^4 - 2x^3 + 3x^2 - 1$ & $-976$ & $\emptyset$ & $\q_5$& $0.5757...$ \\ 
  $M_4$ & $x^3 - x^2 + x - 2$ &  $-83$ & $\{ \q_5 \}$ & $\q_2$ &
  $2.9435...$ \\ \hline
  $M_5$ & $x^2 - 7$ & $-7$ & $\{\q_2, \q_7\}$ & $\qbar_2$ & $5.3334...$
\end{tabular}
}
\vspace{-0.2cm}

\end{center}

\caption{ Covers of the form $\Gamma_0(\p^n)$ of the arithmetic
  orbifolds $M_n$ specified by the data in the table above,
  specifically the orbifold coming from the elements of norm one in a
  maximal order of a quaternion algebra $D$ over a field $K$.
  Here $\q_r$ denotes a prime in $\cO_K$ of norm $r$; this prime
  is unique in every case except the last example, where $\q_2$ and
  $\qbar_2$ denote the two primes in $K = \Q(\sqrt{-7})$ of norm 2.  }
\label{p-power-betti}
\end{figure}

\clearpage
{\RaggedRight 
\bibliographystyle{nmd/math} 
\bibliography{big-IHS}
}
\end{document}